\numberwithin{equation}{section}
\newtheorem{theorem}{Theorem}[section]
\newtheorem{definition}{Definition}[section]
\newtheorem{lemma}{Lemma}[section]
\newtheorem{proposition}{Proposition}[section]
\newtheorem{remark}{Remark}[section]
\newcommand{\norm}[1]{\left\|{#1}\right\|}
\newcommand{\R}{\mathbb{R}}
\title{Uniqueness and stability for the Vlasov-Poisson system with spatial density in Orlicz spaces}
\author{Thomas Holding\footnote{University of Warwick, \emph{t.holding@warwick.ac.uk}} and Evelyne Miot\footnote{CNRS - Institut Fourier, Universit\'e Grenoble-Alpes, \emph{evelyne.miot@univ-grenoble-alpes.fr}}}
\begin{document}
\maketitle

\abstract{In this paper, we establish uniqueness of the solution of the Vlasov-Poisson system with spatial density belonging to a certain class of 
Orlicz spaces. This extends the uniqueness result of 
Loeper \cite{loeper} (which holds for density in $L^\infty$) and of the paper \cite{miot}. Uniqueness is a direct consequence of our main result, which 
provides a quantitative stability estimate for the Wasserstein distance between two weak solutions with spatial density in such Orlicz spaces, 
in the spirit of Dobrushin's proof of stability for mean-field PDEs. Our proofs are built on the second-order structure of the underlying characteristic system associated to the equation. }

\section{Introduction}
The purpose of this article is to study  uniqueness and stability issues for a class of weak solutions of the Vlasov-Poisson system in dimension $d=2$ or $d=3$, which reads:
\begin{equation}
\label{eq:Vlasov-Poisson}\begin{cases}
\displaystyle  \partial_t f+v\cdot \nabla_x f+E\cdot \nabla_v f=0,\quad (t,x,v)\in [0,T]\times \mathbb{R}^d\times \mathbb{R}^d \\
\displaystyle E(t,x)=\left(K\ast_x \rho\right)(t,x),\quad K(x)=\gamma \frac{x}{|x|^d}\\
\displaystyle \rho(t,x)=\int_{\mathbb{R}^d} f(t,x,v)\,dv.\end{cases} 
\end{equation} The system \eqref{eq:Vlasov-Poisson} describes the evolution of a microscopic density $f=f(t,x,v)$ of interacting particles, that are electric particles for $\gamma= 1$ (Coulombian interaction) 
or stars for $\gamma=-1$ (gravitational interaction). The function $\rho$ is called macroscopic (or spatial) density.

\medskip

Existence and uniqueness of  classical solutions of \eqref{eq:Vlasov-Poisson} defined on $[0,T]$ for all $T>0$ were 
established by Ukai and Okabe \cite{UO} for $d=2$ and by Pfaffelmoser  \cite{Pf} for $d=3$. Arsenev \cite{A} proved global existence of weak solutions with finite energy. Another
kind of global solutions, which propagate the velocity moments, was constructed by  Lions and Perthame \cite{lions-perthame}. We refer to the articles \cite{GJP, pallard}, and to references quoted 
therein, for further related results. On the other hand, part of the literature is devoted to determining sufficient conditions for uniqueness.  Loeper \cite{loeper} 
established uniqueness on $[0,T]$ in the class of weak solutions such that the spatial density $\rho$ is uniformly bounded:
\footnote{$\mathcal{M}_+(\mathbb{R}^d\times \mathbb{R}^d)$ denotes the space of bounded positive measures.}
\begin{equation}
\label{cond:loeper}
f\in C\left([0,T],\mathcal{M}_+(\mathbb{R}^{d}\times \mathbb{R}^{d})- w^\ast\right)\quad \text{and }\rho\in L^\infty\left([0,T],L^\infty(\mathbb{R}^d)\right).
\end{equation}This result was extended by
the second author in \cite{miot} to weak solutions satisfying 
\begin{equation}\label{condition:miot}
f\in C\left([0,T],\mathcal{M}_+(\mathbb{R}^{d}\times \mathbb{R}^{d})- w^\ast\right)\quad \text{and }\sup_{[0,T]} \sup_{p\geq 1} \frac{\|\rho(t)\|_{L^p}}{p}<+\infty.
\end{equation}

In \Cref{thm:main} below, we establish uniqueness of the solution with spatial density 
belonging to a certain class of exponential Orlicz spaces defined in \eqref{def:lux-2}. These spaces interpolate 
the functional spaces arising in \eqref{cond:loeper} and \eqref{condition:miot}. Our uniqueness result actually comes as a by-product of the main result of \Cref{thm:main},
which states a quantitative stability estimate involving the Wasserstein distance\footnote{See \Cref{def:W-1} hereafter of the Wasserstein distance.} 
between such weak solutions. We obtain this estimate in the spirit of  the method of Dobrushin \cite{dobrushin} to establish stability estimates for mean field PDE with Lipschitz convolution Kernels $K$.

\medskip

In the second part of this paper, we look for sufficient conditions on the initial data ensuring that any corresponding solution has spatial density belonging to the exponential Orlicz spaces defined in 
\eqref{def:lux-2} on $[0,T]$. In  
\Cref{prop:propagation-of-rho-bound} we prove that this holds for data with finite exponential velocity moment. 

\subsection{Main results}
\subsubsection{Preliminary definitions on Orlicz spaces and on the Wasserstein distance}
\paragraph{Orlicz spaces.} We begin by recalling some standard definitions related to Orlicz spaces. We refer the reader to e.g. \cite{Rao-Orlicz} for a more thorough exposition.
\begin{definition}[$N$-function]
We say that a function $\phi:[0,\infty)\to[0,\infty)$ is an $N$-function if it is continuous, convex with $\phi(\tau)>0$ for $\tau>0$ and satisfies both $\lim_{\tau\to0}\phi(\tau)/\tau=0$ and $\lim_{\tau\to\infty}\phi(\tau)/\tau=\infty$.
\end{definition}
\begin{definition}[Luxemburg norm]\label{def:lux} {Let $U$ be a domain of $\R^d$.}
	For an $N$-function $\phi$ we define the Luxemburg norm of a function $f$ defined on $U$ as
	\begin{equation}
	\norm{f}_{L_\phi(U)}=\inf\left\{\lambda>0:\int_{U}\phi(|f(x)|/\lambda)\,dx<1\right\}.
	\end{equation}
\end{definition}
\begin{remark}\label{rem:bounding-orlicz-norms}
If it holds for some constant $C'$ that
\begin{equation}
\int_{U}\phi(|f(x)|/\gamma)\,dx< C'
\end{equation}
then $\norm{f}_{L_\phi(U)}\le C\gamma$, where $C$ is an absolute constant depending only on $C'$.
\end{remark}
\begin{remark}\label{rem:equivalent-N-functions}
On bounded domains only the asymptotic behaviour as $\tau\to\infty$ of the $N$-function $\phi$ is important in defining the space $L_\phi$. In particular, if two $N$-functions $\phi,\tilde{\phi}$ have the same behaviour at infinity in the sense that there are $K,\tilde{K}>0$ such that $\phi(\tau)\le \tilde{\phi}(\tilde{K}\tau)$ and $\tilde{\phi}(\tau)\le \phi(K\tau)$ for all sufficiently large $\tau$, then the norms $\norm{\cdot}_{L_{\phi}(U)}$ and $\norm{\cdot}_{L_{\tilde{\phi}}(U)}$ are equivalent for any bounded domain $U\subseteq\mathbb{R}^d$.
\end{remark}
\begin{definition}[Complementary $N$-function]
For an $N$-function $\phi$ we define its complementary $N$-function $\bar{\phi}$ as
\begin{equation*}
\bar{\phi}(\tau)=\int^\tau_0a(s)\,ds
\end{equation*}
where $a$ is the right inverse of the right derivative of $\phi$.
\end{definition}
For $\alpha\in [1,+\infty)$ we let, for $\tau \geq 0$,
\begin{equation}\label{eq:phi_alpha}
\phi_\alpha(\tau)=\exp(\tau^\alpha)-1.
\end{equation}
The spaces $L_{\phi_\alpha}(\mathbb{R}^d)$ are exponential Orlicz spaces, and can be equivalently characterised as those functions $g$ which lie in $L^p$ for all $p\in[\alpha,\infty)$ and have the following norm finite:
\begin{equation}\label{def:lux-2}
\norm{g}_{\phi_\alpha}=\sup_{p\ge\alpha}p^{-1/\alpha}\norm{g}_{L^p(\mathbb{R}^d)},
\end{equation}
which is an equivalent norm to the Luxemburg norm $\norm{\cdot}_{L_{\phi_\alpha}(\R^d)}$. This equivalence is standard and can be verified by Taylor expansion of the 
exponential. Note that in the $\alpha\to\infty$ limiting case we obtain the function $\phi_{\infty}$ given by $\phi_\infty(\tau)=\infty$ if $\tau>1$ and $0$ otherwise. 
Although $\phi_\infty$ is not an $N$-function, we will use the convention that $L_{\phi_\infty}=L^\infty$. Therefore {with this convention} $L_{\phi_\alpha}$ indeed interpolates the functional spaces for $\rho$ that
are considered in \cite{loeper} (for $\alpha=+\infty$) and \cite{miot} (for $\alpha=1$).

\medskip

\begin{remark}
	When working with exponential Orlicz spaces, one has the choice between working with the 
	Luxemburg norm \eqref{def:lux} directly, or working with $L^p$ norms \emph{uniformly} in $p$ and using \eqref{def:lux-2}, as is done in \cite{miot} for $\alpha=1$. We take the former approach in this work.
\end{remark}

\paragraph{Transportation distances.}Let $n\geq 2$.
We let $\mathcal{M}_+(\mathbb{R}^n)$ denote the space of bounded positive measures on $\mathbb{R}^n$. 
\begin{definition}[Wasserstein distance]\label{def:W-1}
	For two measures $\mu,\nu\in\mathcal{M}_+(\mathbb{R}^n)$ with the same mass and finite first moments, we define the (Monge-Kantorovich-Rubenstein)-Wasserstein 
	distance $W_1(\mu,\nu)$ as\footnote{Here and throughout, $|v|$ denotes the euclidean norm of $v\in \R^n$.}
	\begin{equation*}
	 W_1(\mu,\nu)=\inf_{\pi\in \Pi(\mu,\nu)}\int_{\mathbb{R}^n\times\mathbb{R}^n}|x-y|\,d\pi(x,y),
	\end{equation*}
	where, here and throughout, $\Pi(\mu,\nu)$ denotes the set of \emph{couplings} 
	between $\mu$ and $\nu$, by which we mean measures in 
	$\mathcal{M}_+(\mathbb{R}^n\times\mathbb{R}^n)$ which have marginals $\mu$ and $\nu$ respectively.
\end{definition}
\begin{remark}
	The Wasserstein distance is usually defined on probability measures (i.e. elements of $\mathcal{M}_+$ with mass $1$) and {metrisizes}
	the weak* topology on the space of probability measures with finite first moment. In the case of 
	the extension to general bounded positive measures given above, it should be noted that the Wasserstein distance does not {metrisize} 
	the weak* topology on $\mathcal{M}_+$ with finite first moment. However, given any fixed mass $m$, the Wasserstein distance {metrisizes} the weak* topology on measures in $\mathcal{M}_+$ of mass $m$ with first moment finite.
\end{remark}
\subsubsection{Main results} 

We are now in position to state a quantitative estimate on the Wasserstein distance between two weak solutions of \eqref{eq:Vlasov-Poisson} with spatial density belonging to
some exponential Orlicz space: 
\begin{equation}\label{eq:assumption-on-rhos}
\rho_j\in L^\infty([0,T];L_{\phi_\alpha}(\mathbb{R}^d)) \text{ for }j=1,2\text{ and some }\alpha\in[1,\infty].
\end{equation}
\begin{theorem}\label{thm:main}Let $\varepsilon>0$ and $T>0$.
Let $f_1,f_2\in C([0,T], \mathcal{M}_+(\mathbb{R}^{d}\times \mathbb{R}^{d})-\textrm{w}^\ast)$ be two weak solutions of the Vlasov-Poisson 
system \eqref{eq:Vlasov-Poisson} with the same total mass such that \eqref{eq:assumption-on-rhos} holds. 
If {$(1+T)^{1+\varepsilon}W_1(f_{1}(0),f_2(0))<1/18$}, then we have the bound for $t\in[0,T^*]$:
\begin{itemize}
	\item 
If $\alpha=1$ then 
\begin{equation*}\label{eq:main-stability-estimate-1}
W_1(f_1(t),f_2(t))\le 
CW_1(f_{1}(0),f_2(0))^{\exp(-Ct)}\left(1+t|\ln W_1(f_1(0),f_2(0))|^2\right).\end{equation*}
\item 
If $\alpha>1$ then
\begin{equation*}\label{eq:main-stability-estimate}
\!\!\!W_1(f_1(t),f_2(t))\le 
CW_1(f_1(0),f_2(0))^{1/\gamma}\exp(Ct^\gamma)\left(1+t|\ln W_1(f_1(0),f_2(0))|^{1+(1/\alpha)}\right),
\end{equation*}
where
\begin{equation*}\label{eq:gamma}
\gamma = \frac{2}{1-(1/\alpha)}\in[2,+\infty),
\end{equation*}
\end{itemize}
and where $T^*$ satisfies the lower bound
\begin{equation*}
 T^\ast \geq  \begin{cases}\displaystyle C'\ln |\ln W_1(f_{1}(0),f_2(0))|-C'^{-1}\quad \text{when }\alpha=1,\\ 
 \displaystyle C' \gamma |\ln W_1(f_{1}(0),f_2(0))|^{1/\gamma}-C'^{-1} \quad \text{when }\alpha \neq 1.\end{cases} 
\end{equation*}(we set $T^\ast=T$ if the right hand side is larger than $T$).
 The constants $C$ and $C'$ depend only upon the norms of $\rho_1,\rho_2$ in \eqref{eq:assumption-on-rhos} and on $\varepsilon$.
\end{theorem}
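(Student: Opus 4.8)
The plan is to follow Dobrushin's strategy adapted to the second-order characteristic flow, controlling the Wasserstein distance by coupling the characteristics of the two solutions. Fix optimal couplings and push them forward along the respective characteristic systems $\dot X_j = V_j$, $\dot V_j = E_j(t,X_j)$. Writing $Q(t)$ for the expected value of $|X_1-X_2| + |V_1-V_2|$ under a well-chosen coupling (which dominates $W_1(f_1(t),f_2(t))$ up to constants), differentiate in time. The velocity terms are immediate; the crux is to bound $\mathbb{E}\,|E_1(t,X_1) - E_2(t,X_2)|$, and this splits as $\mathbb{E}\,|E_1(t,X_1) - E_1(t,X_2)|$ (stability of a single field evaluated at nearby points) plus $\mathbb{E}\,|E_1(t,X_2) - E_2(t,X_2)|$ (difference of the two fields, controlled by $W_1(\rho_1,\rho_2)\le W_1(f_1,f_2)$).

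For the first term one uses the classical log-Lipschitz-type estimate for the force field generated by a density in $L_{\phi_\alpha}$: if $\rho\in L_{\phi_\alpha}$ then $|E(x)-E(y)|$ is bounded by $|x-y|$ times a function that blows up like a power of $|\ln|x-y||$ as $x\to y$ — specifically the modulus of continuity degrades from the pure $r|\ln r|$ of the $L^\infty$ case (Loeper, $\alpha=\infty$) to $r|\ln r|^{1+1/\alpha}$-type behaviour as $\alpha\downarrow 1$. This is exactly where the Orlicz hypothesis enters: one interpolates the singular kernel $K$ against $\rho$ using $L^p$ bounds with $p$ chosen logarithmically in the separation, and the factor $p^{1/\alpha}$ from \eqref{def:lux-2} produces the exponent $1+1/\alpha$. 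The second term, the difference of the two fields at a common point, is handled by a stability estimate of Loeper type: $\|E_1-E_2\|$ in a suitable norm is controlled by (a power of) $W_1(\rho_1,\rho_2)$ together with the Orlicz bounds, since the kernel $K$ is the gradient of the Newtonian potential and one can compare via the potentials. Assembling these gives a differential inequality of the form $Q'(t) \lesssim Q(t) + \Phi(Q(t))$, where $\Phi$ encodes the log-type modulus, i.e. $\Phi(q) \sim q\,|\ln q|^{1+1/\alpha}$ (and $\sim q|\ln q|\ln|\ln q|$-type behaviour in the borderline $\alpha=1$ case).

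The final step is to integrate this Osgood-type differential inequality. For $\alpha = 1$ the modulus is essentially $q|\ln q|^2$ (the extra logarithm arising because the $\alpha=1$ endpoint just fails to be log-Lipschitz), and integrating $Q' \lesssim Q|\ln Q|^2$ yields a double-exponential-in-$t$ deterioration of the exponent, which after unwinding gives the stated bound with $W_1(0)^{\exp(-Ct)}$ and the lifespan $T^*\gtrsim \ln|\ln W_1(0)|$. For $\alpha>1$ the modulus $q|\ln q|^{1+1/\alpha}$ is integrable in the Osgood sense with a polynomial rate: substituting $u = |\ln Q|$ reduces the inequality to $u' \gtrsim -u^{1+1/\alpha}$, whose solution gives $|\ln Q(t)| \gtrsim (|\ln Q(0)|^{-1/\alpha} + Ct)^{-\alpha}$, equivalently the exponent $1/\gamma$ with $\gamma = 2/(1-1/\alpha)$ and the lifespan $T^*\gtrsim |\ln W_1(0)|^{1/\gamma}$. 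The smallness hypothesis $(1+T)^{1+\varepsilon}W_1(f_1(0),f_2(0)) < 1/18$ and the parameter $\varepsilon$ are used to guarantee that $Q(t)$ stays in the regime $|\ln Q|\gg 1$ where all these estimates are valid, and to absorb lower-order polynomial-in-$t$ factors; the additive correction terms $1 + t|\ln W_1(0)|^{1+1/\alpha}$ in the conclusion come from the inhomogeneous part of the Gronwall/Osgood argument.

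I expect the main obstacle to be the first field-stability estimate: proving the precise $r|\ln r|^{1+1/\alpha}$ modulus of continuity for $E$ when $\rho\in L_{\phi_\alpha}$, and doing so with constants that are explicit in the Luxemburg norm, so that the subsequent Osgood integration produces exactly the stated exponents. The second-order structure of the characteristics — i.e. that we must track $|V_1-V_2|$ as well as $|X_1-X_2|$, with the force acting on velocities — is what forces the exponent $\gamma$ to carry the factor $2$ rather than $1$, and keeping that bookkeeping consistent through the differential inequality is the other delicate point.
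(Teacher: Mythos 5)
Your setup (Dobrushin coupling of the characteristics, and the kernel estimate giving a modulus $r|\ln r|^{1+1/\alpha}$ for the field of an $L_{\phi_\alpha}$ density, which is exactly \Cref{lem:harmonic-analysis}) matches the paper, but the core mechanism of the theorem is missing: you never actually use the second-order structure, and the first-order differential inequality you write down cannot produce the stated conclusions. Your inequality $Q'(t)\lesssim Q+\Phi(Q)$ with $\Phi(q)\sim q|\ln q|^{\beta}$, $\beta=1+1/\alpha$, is non-Osgood for every $\alpha\in[1,\infty)$ (since $\int_0 dq/(q|\ln q|^{\beta})<\infty$ when $\beta>1$): integrating it gives $|\ln Q(t)|\gtrsim\bigl(|\ln Q(0)|^{1-\beta}+Ct\bigr)^{-1/(\beta-1)}$, whose useful lifespan is an $O(1)$ constant independent of $W_1(f_1(0),f_2(0))$, and which does not even yield uniqueness (taking $Q(0)=0$ still allows $Q(t)>0$). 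In particular your two integration claims are internally inconsistent with your own inequality: $Q'\lesssim Q|\ln Q|^2$ does \emph{not} integrate to $Q(0)^{\exp(-Ct)}$ (that is what $Q|\ln Q|$ gives), and for $\alpha>1$ the bound $|\ln Q(t)|\gtrsim(|\ln Q(0)|^{-1/\alpha}+Ct)^{-\alpha}$ is not "equivalently the exponent $1/\gamma$ with $\gamma=2/(1-1/\alpha)$" --- no factor $2$ can appear in a purely first-order argument. The paper's proof gets the halving of the logarithmic exponent by integrating the characteristic ODE \eqref{eq:characteristics} \emph{twice}, obtaining the second-order integral inequality $\mathcal{X}(t)\le(\mathcal{X}(0)+\mathcal{V}(0))(1+t)+C\int_0^t\!\int_0^s\psi_\alpha(\mathcal{X}(\tau))\,d\tau\,ds$ for the spatial distance alone (\Cref{lem:estimate-on-D}), and then multiplying $\mathcal{F}''\le C\varphi_\alpha'(\mathcal{F})$ by $\mathcal{F}'$ and integrating to get $\mathcal{F}'\le\sqrt{2C\varphi_\alpha(\mathcal{F})}\le C_1\mathcal{F}|\ln\mathcal{F}|^{\beta/2}$ as in \eqref{eq:differential-inequality-for-F}. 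It is this square root that replaces $\beta$ by $\beta/2$, making the problem log-Lipschitz (Osgood) at $\alpha=1$ and better than log-Lipschitz for $\alpha>1$, and it is the source of the $2$ in $\gamma=2/(1-1/\alpha)$, of the lifespan $T^*\gtrsim|\ln W_1(f_1(0),f_2(0))|^{1/\gamma}$, and of the improvement from Loeper's double exponential to $e^{Ct^2}$ at $\alpha=\infty$. Merely "tracking $|V_1-V_2|$ as well as $|X_1-X_2|$", as you propose, does not achieve this.

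A secondary issue: your splitting $E_1(X_1)-E_1(X_2)$ plus $E_1(X_2)-E_2(X_2)$, with the second term controlled by $W_1(\rho_1,\rho_2)$ via a Loeper-type potential comparison, is not how the paper proceeds and is itself delicate: Loeper's argument bounds $\|E_1-E_2\|_{L^2}$ by $W_2(\rho_1,\rho_2)$ under $L^\infty$ density bounds, and a pointwise control of $E_1-E_2$ by $W_1(\rho_1,\rho_2)$ for the singular kernel $K=\gamma x/|x|^d$ would itself incur the same logarithmic losses. The paper instead rewrites \emph{both} contributions through the coupling $\pi_0$ as integrals of kernel differences against $\rho_1$ or $\rho_2$ (using $f_j(t)=(X_j(t),V_j(t))_\#f_{j0}$ and Fubini), so that \Cref{lem:harmonic-analysis} applies to each and only the quantity $\mathcal{X}$ appears inside $\psi_\alpha$. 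If you want to salvage your outline, replace the second half of your argument by this coupling-based field decomposition and the twice-integrated inequality with the $\mathcal{F}'\mathcal{F}''$ trick; the remaining bookkeeping (the $\mathcal{V}$ estimate by a single integration, the choice of couplings with $\mathcal{X}(0)+\mathcal{V}(0)\le 2W_1(f_1(0),f_2(0))$, and the role of $\varepsilon$ in the lower bound for $T^*$) then goes through as you anticipated.
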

\begin{remark}
The bound is stated in a way that is easy to understand for \emph{large} $t$ and is suboptimal near $t=0$. 
In particular the bound does not converge to $W_1(f_1(0),f_2(0))$ as $t\to0$\footnote{They do, of course, converge to zero as $W_1(f_1(0),f_2(0))\to0$.}. Such a bound could be obtained by a careful analysis of the proofs, but we do not present this here.
\end{remark}
\begin{remark}As will be clear in the proof of \Cref{thm:main}, the time $T^\ast$ essentially corresponds to the first time at which the right hand side 
becomes larger or equal to $1$.
\end{remark}

\begin{remark}\label{remark:case-infinity} For $\alpha=+\infty$, the estimate of Theorem \ref{thm:main} reads
\begin{equation*}
W_1(f_1(t),f_2(t))\le 
CW_1(f_1(0),f_2(0))^{1/2}\exp(Ct^2)\left(1+t|\ln W_1(f_1(0),f_2(0))|\right), 
\end{equation*}which is valid up to times of order $ |\ln W_1(f_{1}(0),f_2(0))|^{1/2}$.
\end{remark}

In \cite{dobrushin}, Dobrushin considered the stability of measure-valued solutions of first order {mean-field PDE} with Lipschitz convolution Kernels $K$ and obtained the 
inequality $$W_1(f_1(t),f_2(t))\le W_1(f_1(0),f_2(0))\exp\left(Ct\|\nabla K\|_{L^\infty}\right).$$ The same estimate was derived by Moussa and Sueur \cite{moussa-sueur} for a mixed first/second order PDE. Hauray and Jabin \cite{hauray-jabin} handled the case of more singular Kernels, see also the recent work by Lazarovici and Pickl \cite{lazarovici2015mean} on cut-off kernels and the references quoted therein. 

In the present situation, we are able to address the case of the singular convolution Kernel 
$K=\gamma x/|x|^d$ because, in contrast with the works mentioned above, the solutions have some additional regularity -
the macroscopic density belongs to $L_{\phi_\alpha}$. Nevertheless, as a consequence of the singularity of $K$, the growth of $W_1(f_1(t),f_2(t))$ in \Cref{thm:main} is not 
linearly bounded in terms of  $W_1(f_1(0),f_2(0))$.

We mention that although stability estimates are not explicitly done in \cite{loeper}, the computations therein involve a log-Lipschitz Gr\"onwall estimate and
would yield the inequality
	\begin{equation}\label{ineq:loeper}
	W_2(f_1(t),f_2(t))\le CW_2(f_1(0),f_2(0))^{\exp(-Ct)},
	\end{equation}with $W_2$ denoting 
	\begin{equation*}
	W_2(\mu,\nu)=\left(\inf_{\pi\in \Pi(\mu,\nu)}\int_{\mathbb{R}^d\times\mathbb{R}^d}|x-y|^2\,d\pi(x,y)\right)^{1/2},
	\end{equation*}
so the $W_2$-Wasserstein distance grows in time roughly like an exponential tower $e^{e^{ct}}$. Therefore the estimate of \Cref{thm:main} setting $\alpha=+\infty$, which corresponds to the regularity considered in 
	\cite{loeper}, improves this to stretched exponential growth of the form $e^{ct^2}$. This improvement is due to the second-order structure of the characteristic
	system \eqref{eq:characteristics} of ODE associated to the Vlasov-Poisson system, which was already exploited in the proof of uniqueness in \cite{miot}.

	Finally, we would like to point out that the same technique of exploiting the second-order structure can be applied to general measure solutions (with no regularity assumption on the spatial density), and allows the Dobrushin estimate to be improved slightly from \emph{Lipschitz} kernels to \emph{log${}^2$-Lipschitz} kernels:
\begin{theorem}\label{thm:log2-lipschitz}
Let the convolution kernel $K$ be bounded and satisfy the log${}^2$-Lipschitz property:
\begin{equation}\label{eq:log2-lipschitz-assumption}
|K(x)-K(y)|\le C|x-y|(\ln|x-y|)^2\text{ for all }|x-y|\le 1/9.
\end{equation}
Then the Vlasov-Poisson system \eqref{eq:Vlasov-Poisson} has a unique solution such that $f$ belongs to $C([0,T];\mathcal{M}_+(\mathbb{R}^d\times\mathbb{R}^d)-\textrm{w}^\ast)$ 
for any initial datum in $\mathcal{M}_+(\mathbb{R}^d\times\mathbb{R}^d)$. Moreover it obeys the stability estimate, for any two solutions 
$f_1,f_2\in C([0,T];\mathcal{M}_+(\mathbb{R}^d\times\mathbb{R}^d)-\textrm{w}^\ast)$ with the same mass and satisfying $(1+T)W_1(f_{1}(0),f_2(0))<1/9$,
\begin{equation*}
W_1(f_1(t),f_2(t))\le CW_1(f_{1}(0),f_2(0))^{\exp(-Ct)}(1+t|\ln W_1(f_1(0),f_2(0))|^2).
\end{equation*}
{which holds for times $t\in[0,T^*]$ with $T^*$ defined analogously to \Cref{thm:main}.}
\end{theorem}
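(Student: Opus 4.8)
To prove \Cref{thm:log2-lipschitz}, the plan is to run the proof of \Cref{thm:main} in its borderline case $\alpha=1$, the log-Lipschitz control of the field $E$ that an $L_{\phi_1}$ density provides there being replaced by the \emph{a priori} log${}^2$-Lipschitz bound \eqref{eq:log2-lipschitz-assumption} on the kernel itself. The first observation is that, since $K$ is bounded and satisfies \eqref{eq:log2-lipschitz-assumption}, for any $\mu\in\mathcal{M}_+(\R^d)$ of mass $m$ the field $K\ast\mu$ has the global modulus of continuity $m\,\ell$, where $\ell$ is a fixed bounded, concave, non-decreasing function with $\ell(0)=0$ and $\ell(r)\le Cr|\ln r|^2$ for small $r$ (one may take $\ell$ to be the least concave majorant of $r\mapsto\min\{Cr|\ln r|^2,\,2\norm{K}_{L^\infty}\}$). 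In particular $E_i(t,\cdot)=K\ast\rho_i(t)$ is continuous, uniformly in $t$, so the characteristic system $\dot X=V$, $\dot V=E_i(t,X)$ admits a continuous (a priori non-unique) flow $\Phi_i(t,\cdot)=(X_i(t,\cdot),V_i(t,\cdot))$; the existence part of the theorem then follows by a routine approximation argument — mollify the datum, solve the (now uniquely solvable) smooth problem, and pass to the limit using the equicontinuity and equiboundedness of the approximating fields — and uniqueness will be the case $W_1(f_1(0),f_2(0))=0$ of the stability estimate, any weak solution being, by standard arguments, a superposition of characteristic trajectories. It remains to sketch the stability estimate.

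I would fix an optimal coupling $\pi_0\in\Pi(f_1(0),f_2(0))$ for $W_1$ and transport it by the two flows; the result is a coupling of $f_1(t)$ and $f_2(t)$, so, setting $D(t)=\int|X_1(t,z)-X_2(t,z)|\,d\pi_0(z)$ and $G(t)=\int|V_1(t,z)-V_2(t,z)|\,d\pi_0(z)$, one has $W_1(f_1(t),f_2(t))\le D(t)+G(t)$ and $D(0),G(0)\le\eta:=W_1(f_1(0),f_2(0))$. Differentiating along the characteristics gives $\dot D\le G$ at once, while $\dot G\le\int|E_1(t,X_1)-E_1(t,X_2)|\,d\pi_0+\int|E_1(t,X_2)-E_2(t,X_2)|\,d\pi_0$. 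The first integral is bounded by $m\int\ell(|X_1-X_2|)\,d\pi_0$ via the modulus of continuity of $E_1$. For the second term — the only genuinely new point with respect to the classical Dobrushin argument — I would use that $\rho_i(t)$ is the spatial marginal of $\Phi_i(t,\cdot)_{\#}f_i(0)$, together with the coupling $\pi_0$, to write, for every $y$,
\[
E_1(t,y)-E_2(t,y)=\int\bigl(K(y-X_1(t,z'))-K(y-X_2(t,z'))\bigr)\,d\pi_0(z'),
\]
so that $|E_1(t,y)-E_2(t,y)|\le\int\ell(|X_1(t,z')-X_2(t,z')|)\,d\pi_0(z')$; integrating in $z$ against $\pi_0$, using $\int g(X_2(t,z))\,d\pi_0(z)=\int g\,d\rho_2(t)$ and that this measure has mass $m$, and applying Jensen's inequality (concavity of $\ell$, after renormalising the masses), one obtains $\dot G(t)\le C\ell(D(t))$.

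These two inequalities say that $D$ obeys the second-order Volterra integral inequality $D(t)\le D(0)+G(0)t+C\int_0^t(t-r)\ell(D(r))\,dr$, so a standard comparison principle gives $D(t)\le v(t)$, with $v$ the maximal solution of $\ddot v=C\ell(v)$, $v(0)=D(0)$, $\dot v(0)=G(0)$. The heart of the proof, and the point at which the second-order structure is decisive, is the energy identity for this comparison equation: $v$ is non-decreasing ($\ddot v\ge0$), and multiplying by $\dot v$ and integrating yields $\tfrac12\dot v(t)^2=\tfrac12 G(0)^2+C\bigl(L(v(t))-L(D(0))\bigr)$, where $L(r)=\int_0^r\ell$, hence $\dot v\le G(0)+\sqrt{2C\,L(v)}$. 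Since $L(r)$ is comparable to $Cr^2|\ln r|^2$ as $r\to0$, the function $\Lambda(r):=\sqrt{2L(r)}$ is, near $0$, a \emph{log-Lipschitz} — hence Osgood — modulus, $\Lambda(r)\le Cr|\ln r|$; thus while $v$ stays in a fixed small neighbourhood of $0$ one has $\dot v\le Cv|\ln v|$, and the log-Lipschitz Gr\"onwall integration (exactly as in the $\alpha=1$ case of \Cref{thm:main}, using $\int\frac{dr}{r|\ln r|}=-\ln|\ln r|$) gives $D(t)\le v(t)\le C\eta^{\exp(-Ct)}$ on an interval $[0,T^*]$, where $T^*$ is the time at which $v$ leaves that neighbourhood; the same computation yields $T^*\ge C'\ln|\ln\eta|-C'^{-1}$, which is the asserted lower bound.

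Finally, inserting $D(s)\le C\eta^{\exp(-Cs)}$ into $G(t)\le\eta+C\int_0^t\ell(D(s))\,ds\le\eta+Ct\,\eta^{\exp(-Ct)}|\ln\eta|^2$ and using $W_1(f_1(t),f_2(t))\le D(t)+G(t)$ gives the stated bound $W_1(f_1(t),f_2(t))\le C\eta^{\exp(-Ct)}(1+t|\ln\eta|^2)$. All of this is run inside a continuity/bootstrap argument keeping $D(t)$ (and hence the argument of $\ell$) below the scale at which \eqref{eq:log2-lipschitz-assumption} and the log${}^2$ asymptotics of $\ell$ are used, and it is here that the smallness hypothesis $(1+T)W_1(f_1(0),f_2(0))<1/9$ enters. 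I expect the main obstacle to be exactly this combined energy-and-bootstrap step: one must verify that $D$ stays small enough for long enough that $L(D)$ is genuinely comparable to $D^2|\ln D|^2$, so that the square-rooted modulus $\sqrt{L}$ is really Osgood and the log-Lipschitz Gr\"onwall estimate applies — everything else (existence by approximation, uniqueness as the zero case, the coupling bookkeeping) being routine.
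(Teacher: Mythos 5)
Your proposal is correct and follows essentially the same route as the paper: the paper proves \Cref{thm:log2-lipschitz} by rerunning the $\alpha=1$ case of \Cref{thm:main} with \Cref{lem:harmonic-analysis} replaced by the elementary bound $\int|K(x-z)-K(y-z)|\,d\mu(z)\le Cm\,\psi_1(|x-y|)$, i.e.\ the same coupling decomposition of $E_1-E_2$, concavity/Jensen, the same second-order integral inequality, the same energy trick (multiplying by the derivative) that converts the $r|\ln r|^2$ modulus into the Osgood modulus $r|\ln r|$, and the same log-Lipschitz Gr\"onwall step and velocity bound yielding the stated estimate and $T^*$. One small caveat on the part the paper leaves to the reader: for existence you should mollify the kernel $K$ rather than the initial datum (regularizing $f_0$ does not make $E=K\ast\rho$ Lipschitz), and the flow representation you use is justified because uniqueness of the characteristic trajectories themselves follows from the same second-order Osgood estimate (cf.\ \Cref{rem:Holder-flow}), the plain first-order theory being inapplicable since $r|\ln r|^2$ is not an Osgood modulus.
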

We remark that the conventional improvement of the Dobrushin estimate by replacing the Gr\"onwall inequality with a log-Lipschitz inequality only allows one to treat log-Lipschitz kernels $K$, rather than the slightly weaker assumption \eqref{eq:log2-lipschitz-assumption}.

\medskip

In the second part of our analysis, we seek for initial data $f_0$ for which the macroscopic density indeed belongs to some exponential Orlicz space.
\begin{proposition}\label{prop:propagation-of-rho-bound}
Let $f_0\in L^\infty(\mathbb{R}^{d}\times \mathbb{R}^{d})$ be such that
\begin{equation*}
\int_{\R^d\times \R^d} f_0(x,v)e^{c\<v^{d\alpha}}\,dxdv<\infty,
\end{equation*}
for some $\alpha\in[1,\infty)$ and $c>0$,{where $\<v=\sqrt{1+|v|^2}$}. For $T>0$, let $f\in C([0,T],\mathcal{M}_+(\mathbb{R}^{d}\times \mathbb{R}^{d})- w^\ast)\cap L^\infty([0,T],L^1\cap L^\infty(\mathbb{R}^{d}\times \mathbb{R}^{d}))$ 
be any solution to \eqref{eq:Vlasov-Poisson}, with this initial datum, provided by  \cite[Theo. 1]{lions-perthame}\footnote{The existence of such a 
solution is ensured by \cite[Theo. 1]{lions-perthame} because 
$f_0$ has finite velocity moments of sufficiently large order: $
 \int |v|^m f_0(x,v)\,dx\,dv<+\infty
$ for some $m>d^2-d$. This is proved in 
\cite{lions-perthame} for $d=3$. The case $d=2$ 
is a straightforward adaptation of the case $d=3$.}.
Then it satisfies
\begin{equation*}
\sup_{t\in[0,T]}\norm{\rho(t)}_{L_{\phi_\alpha}}\le C<\infty.
\end{equation*}
In particular, this solution satisfies the uniqueness criterion of \Cref{thm:main}.
\end{proposition}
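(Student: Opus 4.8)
The plan is to reduce the Orlicz bound on $\rho$ to a quantitative propagation of polynomial velocity moments, which in turn follows from the a priori estimates attached to the Lions--Perthame solution together with the standard interpolation between an $L^\infty$ bound and a weighted $L^1$ bound on $f$. First I record the starting observation: since $d\alpha\ge1$, the map $\langle v\rangle^k e^{-c\langle v\rangle^{d\alpha}}$ is bounded for every $k\ge0$, so the hypothesis $\int f_0 e^{c\langle v\rangle^{d\alpha}}<\infty$ forces $\int f_0\langle v\rangle^k\,dx\,dv<\infty$ for all $k\ge0$; in particular $f_0$ has a finite velocity moment of some order $m>d^2-d$, which (with $f_0\in L^1\cap L^\infty$) is exactly what is needed for \cite[Theo.~1]{lions-perthame}. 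The solution $f$ it produces satisfies $\rho\in L^\infty([0,T];L^1\cap L^p(\R^d))$ for every $p<\infty$ and $\sup_{[0,T]}\int f(t)\langle v\rangle^k\,dx\,dv<\infty$ for every $k$.

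The first real step is to bound the force field. Taking $p>d$, writing $E=K\ast_x\rho$ with $|K(x)|\le\gamma|x|^{1-d}$, splitting the convolution at $|x-y|=1$ and using Hölder (the near part with $\rho\in L^p$, $p>d$, the far part with $\rho\in L^1$), I get
\[
M:=\sup_{t\in[0,T]}\|E(t)\|_{L^\infty(\R^d)}<\infty .
\]

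Next I make the moment bound quantitative in $k$. Testing the equation against $\langle v\rangle^k$ — justified for the weak Lions--Perthame solution by the usual truncation/regularisation, or read off the characteristic flow, along which $\dot V=E(s,X(s))$ so $|V(s)-v|\le MT$ — the transport term drops and $\frac{d}{dt}\int f(t)\langle v\rangle^k = k\int f(t)\langle v\rangle^{k-2}(E\cdot v)\le kM\int f(t)\langle v\rangle^k$, hence, with $C_0:=\int f_0 e^{c\langle v\rangle^{d\alpha}}\,dx\,dv$ and using $\langle v\rangle^k\le(k/(d\alpha ec))^{k/(d\alpha)}e^{c\langle v\rangle^{d\alpha}}$,
\[
m_k(t):=\int f(t,x,v)\langle v\rangle^k\,dx\,dv\ \le\ m_k(0)\,e^{kMT}\ \le\ C_0\,\big(A\,k^{1/(d\alpha)}\big)^{k},\qquad A=A(c,d,\alpha,M,T).
\]
Then for $R>0$, using $\int_{|v|\le R}f\,dv\le C\|f_0\|_\infty R^d$ and $\int_{|v|>R}f\,dv\le R^{-k}\int f\langle v\rangle^k\,dv$ (since $\langle v\rangle>R$ on $\{|v|>R\}$) and optimising in $R$,
\[
\rho(t,x)\le C_{d,k}\,\|f_0\|_\infty^{k/(d+k)}\Big(\int f(t,x,v)\langle v\rangle^k\,dv\Big)^{d/(d+k)},
\]
with $C_{d,k}$ bounded uniformly in $k$. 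Integrating in $x$ with $p=(d+k)/d$ (so $d/(d+k)=1/p$ and $k/(d+k)=1-1/p$) and inserting the moment bound above,
\[
\|\rho(t)\|_{L^p(\R^d)}\le C_{d,k}\,\|f_0\|_\infty^{1-1/p}\big(C_0\,(A\,k^{1/(d\alpha)})^{k}\big)^{1/p}.
\]
Because $k=d(p-1)$ one has $k/p\le d$ and $k/(d\alpha p)=(p-1)/(\alpha p)\le1/\alpha$, so the right-hand side is $\le C_\ast\,p^{1/\alpha}$ for all $p\ge\alpha$ and $t\in[0,T]$; equivalently $\sup_{p\ge\alpha}p^{-1/\alpha}\|\rho(t)\|_{L^p}\le C_\ast$, which by \eqref{def:lux-2} means $\|\rho(t)\|_{L_{\phi_\alpha}}\le C_\ast$ uniformly on $[0,T]$. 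The final assertion is then immediate, since this is precisely assumption \eqref{eq:assumption-on-rhos} in \Cref{thm:main}.

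The only input I cannot obtain by soft arguments is the boundedness of the field in the second step — this is exactly where the choice of the Lions--Perthame solution (rather than an arbitrary weak solution) is essential. Everything else is bookkeeping; the one point requiring care is that the exponents match: an exponential weight of order $d\alpha$ produces moments of size $\sim k^{k/(d\alpha)}$, and the interpolation exponent $d/(d+k)$ converts this into precisely the $p^{1/\alpha}$ growth that characterises $L_{\phi_\alpha}$ through \eqref{def:lux-2}.
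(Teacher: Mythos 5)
Your argument is correct, but it takes a genuinely different route from the paper. The paper propagates the \emph{exponential} velocity moment $M(t)=\int f(t)e^{1+c\<v^{d\alpha}}\,dx\,dv$ directly, via the differential inequality $M'\le C[1+M(\ln M)^{1-1/(d\alpha)}]$ of \Cref{lem:differential-inequality-for-M} (solvable on $[0,T]$ since the exponent on the logarithm is below $1$), and then converts the exponential moment into the Orlicz bound through the pointwise interpolation of \Cref{lem:exp-rho-bounded-by-velocity-moment}, $\rho(x)\le 1+C\ln(1\vee M(x))^{1/\alpha}$, working with the Luxemburg norm via \Cref{rem:bounding-orlicz-norms}. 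You instead propagate every \emph{polynomial} moment with explicit dependence on $k$ ($m_k(t)\le C_0(Ak^{1/(d\alpha)})^k$, from Gr\"onwall with $\norm{E}_{L^\infty}$ and the elementary bound $\<v^k\le (k/(d\alpha ec))^{k/(d\alpha)}e^{c\<v^{d\alpha}}$), then use the kinetic interpolation $\norm{\rho(t)}_{L^p}\lesssim \norm{f_0}_{L^\infty}^{1-1/p}m_{d(p-1)}(t)^{1/p}$ and the characterization \eqref{def:lux-2}; this is precisely the $L^p$-uniform approach of \cite{miot} (generalized from $\alpha=1$ to all $\alpha\ge1$) that the paper's remark explicitly chooses to avoid in favour of the Luxemburg norm. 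Your bookkeeping checks out: the interpolation constant is uniform in $k$, $k/p\le d$ and $k/(d\alpha p)\le 1/\alpha$ give exactly the $p^{1/\alpha}$ growth, and both routes rest on the same key external input, $\sup_{[0,T]}\norm{E(t)}_{L^\infty}<\infty$ from Lions--Perthame (the paper cites \cite[Cor.~2]{lions-perthame}; your derivation from $\rho\in L^\infty_t(L^1\cap L^p)$, $p>d$, is the same mechanism). One presentational caution: you assert at the outset that the Lions--Perthame solution has all moments bounded and $\rho\in L^\infty_t L^p$ for every $p$, before you have the field bound that your own Gr\"onwall argument needs; to avoid circularity you should say, as the paper does, that Theorem~1 of Lions--Perthame propagates the single moment of order $m>d^2-d$ (finite initially by your first observation), that this alone yields $E\in L^\infty$ on $[0,T]$, and only then run your quantitative propagation of all higher moments. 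What each approach buys: the paper's is shorter and tracks no constants in $k$ or $p$, needing just one ODE inequality; yours is more elementary (Gr\"onwall plus Stirling-type estimates) and yields explicit quantitative $L^p$ growth, at the cost of the uniform-in-$k$ bookkeeping you carried out.
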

We remark that setting $\alpha=1$, we retrieve as a particular case the condition obtained in
\cite[Theo. 1.2]{miot} to ensure that \eqref{condition:miot} holds.

\medskip

The plan of the remainder of the paper is as follows. In Section \ref{sec:2} we prove Theorem \ref{thm:main}. We first establish in Lemma 
\ref{lem:harmonic-analysis} a log-Lipschitz like estimate for the 
force field $E=K\ast \rho$ associated to a function $\rho$ satisfying \eqref{eq:assumption-on-rhos}. Then, 
we introduce in \eqref{def:distance} a notion of distance between two solutions in terms of the characteristics 
defined in \eqref{eq:characteristics}, which controls the Wasserstein distance (see \eqref{carac:wasserstein}). 
This quantity was used in the original proof of Dobrushin and also
in \cite{miot}, while the proof of \cite{loeper} uses a slightly different version. Applying similar arguments as in \cite{dobrushin}, we derive a second-order differential 
inequality for this distance, which eventually leads to Theorem \ref{thm:main}. {In \Cref{subsec:proof-of-log2-lipschitz} we show how to adapt this technique to prove \Cref{thm:log2-lipschitz}.} Finally, the last Section \ref{sec:3} is devoted to the proof of \Cref{prop:propagation-of-rho-bound}.

\section{Proof of Theorem \ref{thm:main}}\label{sec:2}

\subsection{An estimate for the Newton kernel}
To prove \Cref{thm:main} we have need of the following lemma on the Newton kernel. Note that the complementary $N$-functions of the $\phi_{\alpha}$ behave asymptotically (see \Cref{rem:equivalent-N-functions}) like
\begin{equation}\label{eq:asymptotic-of-bar-phi-alpha}
\bar{\phi}_{\alpha}(\tau)\sim\tau\ln(\tau)^{1/\alpha}\text{ as }\tau\to\infty.
\end{equation}
Recall that Orlicz spaces obey a form of H\"older's inequality (see e.g. \cite{Rao-Orlicz})
\begin{equation*}
\left|\int_{\R^d} fg\,dx\right|\le C\norm{f}_{L_\phi}\norm{g}_{L_{\bar{\phi}}}
\end{equation*}
for the constant $C>1$.

Given $\alpha\in[1,\infty]$ we define the constant $\beta\in[1,2]$ by
\begin{equation}\label{eq:beta}
\beta = \frac1\alpha+1.
\end{equation}
In particular, note that $\beta=1$ for $\alpha=+\infty$ and $\beta=2$ as $\alpha=1$.
\begin{lemma}\label{lem:harmonic-analysis}
Let $\alpha\in[1,\infty)$, then there exists $C=C(\alpha)>0$ such that for all $g\in L_{\phi_\alpha}\cap L^1$ we have the estimate
\begin{equation}
\int_{\mathbb{R}^d} \left|K(x-z)-K(y-z)\right||g(z)|\,dz\le C(\norm{g}_{L_{\phi_\alpha}}+\norm{g}_{L^1})\psi_\alpha(|x-y|),
\end{equation}
where $\psi$ is defined by
\begin{equation}\label{eq:psi-alpha}
\psi_\alpha(\tau)=\begin{cases}\displaystyle \tau|\ln(\tau)|^\beta,\quad \text{for }\tau\in \left[0,\frac{1}{9}\right],\\
\displaystyle \frac{1}{9}\ln(9)^\beta,\quad \text{for }\tau\geq \frac{1}{9}\end{cases}
\end{equation}
and where $\beta$ is defined by \eqref{eq:beta}. 
\end{lemma}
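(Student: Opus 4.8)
The plan is to split the integral according to the size of $h:=|x-y|$ relative to $|x-z|$, and to reduce everything to two estimates for Luxemburg norms of truncated powers of $|\cdot|$. Throughout one may assume $g\ge 0$, and one uses that $|K(w)|=|\gamma|\,|w|^{1-d}$ and $|\nabla K(w)|\le C|w|^{-d}$ (valid in both $d=2$ and $d=3$).

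\emph{Step 1: two Orlicz-norm estimates.} The heart of the argument is to prove, for $r\in(0,1/3]$ and with $C=C(\alpha)$,
\begin{align*}
\norm{\,|\cdot|^{1-d}\,\mathbf 1_{B(0,r)}\,}_{L_{\bar\phi_\alpha}} &\le C\,r\,|\ln r|^{1/\alpha},\\
\norm{\,|\cdot|^{-d}\,\mathbf 1_{\{r<|\cdot|\le 1\}}\,}_{L_{\bar\phi_\alpha}} &\le C\,|\ln r|^{\beta}.
\end{align*}
By \Cref{rem:bounding-orlicz-norms} it is enough, taking $\lambda$ a fixed $\alpha$-dependent multiple of the asserted right-hand side, to check that $\int\bar\phi_\alpha(\,\cdot\,/\lambda)\le 1$. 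Using \eqref{eq:asymptotic-of-bar-phi-alpha} in the quantitative form $\bar\phi_\alpha(\tau)\le C_\alpha\,\tau\,(1+|\ln\tau|)^{1/\alpha}$, passing to polar coordinates, and substituting $u=-\ln s$, these integrals reduce — after absorbing the harmless $-\ln\lambda$ term, legitimate since $|\ln\lambda|\lesssim|\ln r|\le|\ln s|$ on the relevant ranges — to $\int_0^r|\ln s|^{1/\alpha}\,ds=\int_{|\ln r|}^\infty u^{1/\alpha}e^{-u}\,du\sim r|\ln r|^{1/\alpha}$ (incomplete-Gamma asymptotics as $|\ln r|\to\infty$) and to $\int_r^1 s^{-1}|\ln s|^{1/\alpha}\,ds=\int_0^{|\ln r|}u^{1/\alpha}\,du\sim|\ln r|^{1+1/\alpha}=|\ln r|^{\beta}$, respectively; the choice of $\lambda$ then makes each integral at most $1$. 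For $r$ in the compact range $[\delta,1/3]$ (with $\delta=\delta(\alpha)$ the threshold beyond which these asymptotics are clean) both sides are bounded above and below by positive constants, so the bounds persist after enlarging $C(\alpha)$.

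\emph{Step 2: the case $h<1/9$.} Split $\R^d=B(x,3h)\cup\{3h<|x-z|\le1\}\cup\{|x-z|>1\}$. On $B(x,3h)\subseteq B(y,4h)$, write $|K(x-z)-K(y-z)|\le|K(x-z)|+|K(y-z)|$ and apply the Orlicz H\"older inequality together with the first norm estimate (at scales $3h$ and $4h$); this contributes at most $C\,h\,|\ln h|^{1/\alpha}\norm{g}_{L_{\phi_\alpha}}\le C\,h\,|\ln h|^{\beta}\norm{g}_{L_{\phi_\alpha}}$. On $\{3h<|x-z|\le1\}$, every $w\in[x,y]$ satisfies $|w-z|\ge\tfrac23|x-z|$, so the mean value theorem gives $|K(x-z)-K(y-z)|\le h\sup_{w\in[x,y]}|\nabla K(w-z)|\le C\,h\,|x-z|^{-d}$, and H\"older with the second norm estimate contributes at most $C\,h\,|\ln h|^{\beta}\norm{g}_{L_{\phi_\alpha}}$. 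On $\{|x-z|>1\}$, one has $|w-z|\ge 8/9$ for all $w\in[x,y]$, where $|\nabla K|\le C$, so $|K(x-z)-K(y-z)|\le C\,h$ and this term is at most $C\,h\norm{g}_{L^1}\le C\,h\,|\ln h|^{\beta}\norm{g}_{L^1}$ (as $|\ln h|>\ln 9>1$). Summing the three contributions and recalling $\psi_\alpha(h)=h|\ln h|^{\beta}$ on $[0,1/9]$ yields the claim in this range.

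\emph{Step 3: the case $h\ge 1/9$, and the main obstacle.} Here $\psi_\alpha(h)=\tfrac19(\ln 9)^{\beta}$ is a fixed positive constant, so it suffices to prove the crude uniform bound $\int_{\R^d}|K(x-z)-K(y-z)|\,g(z)\,dz\le C(\norm{g}_{L_{\phi_\alpha}}+\norm{g}_{L^1})$; this follows by bounding $|K(x-z)-K(y-z)|\le|K(x-z)|+|K(y-z)|$ and splitting each term over $B(x,1)$ (resp.\ $B(y,1)$), where one uses H\"older and $\norm{|\cdot|^{1-d}\mathbf 1_{B(0,1)}}_{L_{\bar\phi_\alpha}}<\infty$, and over its complement, where $|K|\le C$ and one uses $\norm{g}_{L^1}$. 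I expect the only genuinely delicate point to be the second norm estimate of Step~1: it is exactly here that the exponent in $\psi_\alpha$ becomes $\beta=1+1/\alpha$ rather than $1/\alpha$, the extra logarithm arising from integrating $|\ln s|^{1/\alpha}$ against the scale-invariant measure $ds/s$ over $s\in[h,1]$, and one must check that the $\ln\lambda$ correction inside $\bar\phi_\alpha$ genuinely does not alter this power.
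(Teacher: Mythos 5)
Your argument is correct and takes essentially the same route as the paper's proof: a three-region split, the mean value theorem away from the singularity, Orlicz--H\"older near it, and the same two Luxemburg-norm computations (for $|\cdot|^{1-d}$ on a small ball and $|\cdot|^{-d}$ on an annulus) via $\bar\phi_\alpha(\tau)\sim\tau(\ln\tau)^{1/\alpha}$, the only structural difference being that you cut the annulus at the fixed radius $1$ and exploit the Lipschitz bound on $K$ beyond it, whereas the paper cuts at the intermediate radius $|\ln R|^{-\delta}$ and lets the $L^1$ far field carry the full factor $|\ln R|^{\beta}$. One small slip in Step~1: on the annulus $s\in[r,1]$ your justification ``$|\ln\lambda|\lesssim|\ln r|\le|\ln s|$'' has the last inequality reversed, but the absorption still goes through because $\lambda\ge1$ gives $\ln(s^{-d}/\lambda)\le d|\ln s|$ wherever the argument of $\bar\phi_\alpha$ exceeds $1$, while the range where it is below $1$ contributes only $O(|\ln r|/\lambda)\le C/K$.
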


\begin{remark}\label{rem:alpha-infinity-case} In the case $\alpha=+\infty$, namely for $g\in L^1\cap L^\infty$ the estimate of Lemma \ref{lem:harmonic-analysis} 
is standard, see e.g. \cite[Lemma 8.1]{majda-bertozzi} for the case $d=2$: we have
\begin{equation*}
\int_{\mathbb{R}^d} \left|K(x-z)-K(y-z)\right||g(z)|\,dz\le C(\|g\|_{L^1\cap L^\infty})|x-y|\left(1+ |\ln | x-y||\right).
\end{equation*}
\end{remark}

\begin{remark} For the case $\alpha=1$, the following variant of Lemma \ref{lem:harmonic-analysis} was obtained in \cite[Lemma 2.2]{miot}: for all $x,y\in \mathbb{R}^d$ with $|x-y|$ sufficiently small,  
\begin{equation*}
\int_{\mathbb{R}^d} \left|K(x-z)-K(y-z)\right||g(z)|\,dz\le C\, p\, (\|g\|_{L^1}+\|g\|_{L^p})|x-y| ^{1-d/p},\quad \forall p> d.
\end{equation*}
In particular, recalling \eqref{def:lux-2} for $\alpha=1$, this yields
\begin{equation*}
\int_{\mathbb{R}^d} \left|K(x-z)-K(y-z)\right||g(z)|\,dz\le C  \|g\|_{L_{\phi_1}}|x-y| \left(p^2| x-y|^{-d/p}\right),\quad \forall p> d,
\end{equation*}
so setting $p= |\ln | x-y||$ we retrieve the estimate of Lemma \ref{lem:harmonic-analysis}. In fact one can also prove the other cases via this method. Nevertheless, 
we give a direct proof of \Cref{lem:harmonic-analysis} below for completeness.
\end{remark} 

\begin{proof}
We set $$\delta=\frac{1}{d}\left(1+\frac{1}{\alpha}\right)=\frac{\beta}{d}\in \left(\frac{1}{d},\frac{2}{d}\right].$$
By standard estimates using H\"older's inequality (see e.g. \cite{livrejaune}) it is well-known that, fixing some $p_0>d$,
\begin{equation*}
\int_{\mathbb{R}^d} \left|K(x-z)-K(y-z)\right||g(z)|\,dz\le C(\norm{g}_{L^{p_0}}+\norm{g}_{L^1})\leq C(\norm{g}_{L_{\phi_\alpha}}+\norm{g}_{L^1}).
\end{equation*} Hence, in view of the form of $\psi_\alpha$, letting $R=|x-y|$ we may assume without loss of generality that 
$R\le 1/9$.  We introduce $A=(x+y)/2$. Since $R |\ln R|^\delta<1$, we may split the integral as follows:
	\begin{align*}
		\int_{\mathbb{R}^d} &\left|K(x-z)-K(y-z)\right||g(z)|\,dz\\
		&=\int_{\mathbb{R}^d\setminus B(A,{|\ln R|^{-\delta}})} \left|K(x-z)-K(y-z)\right||g(z)|\,dz\\
		&+\int_{B(A,{|\ln R|^{-\delta}})\setminus B(A,R)} \left|K(x-z)-K(y-z)\right||g(z)|\,dz+\int_{B(A,R)} \left|K(x-z)-K(y-z)\right||g(z)|\,dz\\
		&=I_1+I_2+I_3.
	\end{align*}
	For $I_1$ we apply the mean value theorem to obtain the bound
	\begin{equation*}\begin{split}
	I_1&\le C\norm{g}_{L^1}\,R\,\sup_{u\in[x,y],z\in \mathbb{R}^d\setminus B(A,|\ln R|^{-\delta})}\frac{1}{|u-z|^d}\le C\norm{g}_{L^1}\,R|\ln R|^{d\delta}
	\end{split}
	\end{equation*}
	where $[x,y]$ is the line segment joining $x$ and $y$, and where 
	we have used that $$|u-z|\ge |\ln R|^{-\delta} -\frac{R}{2}\ge \frac{|\ln R|^{-\delta}}{2}$$ in the considered supremum. Therefore we have obtained
	\begin{equation*}\begin{split}
	I_1&\le C\norm{g}_{L^1}R|\ln R|^{\beta}.
	\end{split}
	\end{equation*}

	\medskip

	For $I_3$ we apply H\"older's inequality for Orlicz spaces,
	\begin{equation*}
	I_3\le C\norm{g}_{L_{\phi_\alpha}}\norm{1_{B(A,R)}|K(x-z)-K(y-z)|}_{L_{\bar{\phi}_\alpha}}\le C\norm{g}_{L_{\phi_\alpha}}\norm{1_{B(0,3R/2)}K}_{L_{\bar{\phi}_\alpha}}
	\end{equation*}
	where we have used the fact that {$\bar{\phi}_\alpha$ is increasing}, that
	\begin{equation*}
	|K(x-z)-K(y-z)|\le |K(x-z)|+|K(y-z)|
	\end{equation*}
	and that $z\in B(A,R)$ implies that both $x-z$ and $y-z$ lie in $B(0,3R/2)$.
	
	Now we set $\lambda=R|\ln R|^{1/\alpha}$ and we consider the integral
	\begin{equation*}
	\int_{\R^d}\bar{\phi}_\alpha(1_{B(0,3R/2)}|K(z)|/\lambda)\,dz.
	\end{equation*}
	By \Cref{rem:bounding-orlicz-norms}, to show that $I_3\le C\norm{g}_{L_{\phi_\alpha}}\lambda$ it is sufficient to show that the 
	integral above is bounded by a constant.
	Furthermore, by \Cref{rem:equivalent-N-functions} using the fact that $|K(z)|/\lambda \ge 1>0$ on $B(0,3R/2)$  
	we may work with the asymptotic form \eqref{eq:asymptotic-of-bar-phi-alpha}. 
	
Thus, we estimate
	\begin{align*}
	\int_{|z|\le 3R/2}\frac{|K(z)|}{\lambda} \ln\left(\frac{|K(z)|}{\lambda}\right)^{1/\alpha}\,dz
	&=\frac{1}{\lambda}\int_{|z|\le 3R/2}|z|^{{1-d}}\ln\left(\frac{|z|^{1-d}}{\lambda}\right)^{1/\alpha}\,dz\\
	&\leq \frac{C}{\lambda}\int^{3R/2}_0|\ln r |^{1/\alpha}\,dr\end{align*}
	where we have used the inequality 
	$$0\leq \ln\left(\frac{|z|^{1-d}}{\lambda}\right)\leq (d-1)|\ln |z||+|\ln \lambda|\leq C|\ln |z||$$ 
	with this definition of $\lambda$. 
	
	Thus, noting that  for $r\leq 3R/2\leq 1/6$ we have
	$$ \frac{1}{\alpha}|\ln r |^{(1/\alpha)-1}\leq \frac{1}{2} |\ln r|^{1/\alpha},$$ so that 
	$$|\ln r|^{1/\alpha}\leq 2\left(|\ln r|^{1/\alpha}-\frac{1}{\alpha}|\ln r|^{(1/\alpha)-1}\right),$$
	 we obtain
\begin{align*}\int_{|z|\le 3R/2}\frac{|K(z)|}{\lambda} \ln\left(\frac{|K(z)|}{\lambda}\right)^{1/\alpha}\,dz
	&\le \frac{C}{\lambda}\int^{3R/2}_0\left(|\ln r|^{1/\alpha}-\frac{1}{\alpha}|\ln r|^{(1/\alpha)-1}\right)\,dr\\
	&=\frac{C}{\lambda}\left[r|\ln r|^{1/\alpha}\right]^{3R/2}_0\le \frac{C}{\lambda} R|\ln R|^{1/\alpha}.
	\end{align*}  Thus we have shown that
$$I_3\leq C \norm{g}_{L_{\phi_\alpha}} R|\ln R|^{1/\alpha}.$$

	Finally we bound $I_2$. In the same way as for $I_3$ we apply  H\"older's inequality for Orlicz spaces to obtain
	\begin{equation*}
	I_2\le C\norm{g}_{L_{\phi_\alpha}}\norm{1_{B(A,|\ln R|^{-\delta})\setminus B(A, R)}
	|K(x-z)-K(y-z)|}_{L_{\bar{\phi}_{\alpha}}}.
	\end{equation*}
	Applying the mean value theorem we obtain  for $z\in B(A,|\ln R|^{-\delta})\setminus B(A, R)$ 
	\begin{equation*}\begin{split}
	|K(x-z)-K(y-z)|&\le C\,R\sup_{u\in[x,y]}|u-z|^{-d}\\
	&\le C\,R\sup_{u\in[x,y]}\frac{1}{||z-A|-|u-A||^d}\\
	&\le C\,R|z-A|^{-d}
	\end{split}
	\end{equation*}
	where we have used that $|z-A|\ge R$ to obtain the final inequality. Hence, by a change of variables, and since $\bar{\phi}_\alpha$ is increasing, 
	to bound $I_2$ it is sufficient to obtain the bound 

	$$	{\norm{1_{B(0,|\ln R|^{-\delta})\setminus B(0,R)} R|z|^{-d}}_{L_{\bar{\phi}_\alpha}}\le C\,R|\ln R|^{\beta}.}$$ 

	Therefore setting $\lambda'=R|\ln R|^{\beta}$,	by \Cref{rem:bounding-orlicz-norms} it is enough to show that 
	\begin{equation*}
	\int_{R\le |z|\le |\ln R|^{-\delta}}\bar{\phi}_\alpha(R|z|^{-d}/\lambda')\,dz\le C.
	\end{equation*}
	Let $|z|\leq |\ln R|^{-\delta}$, then we have by definition of $\lambda'$
	$$\frac{R|z|^{-d}}{\lambda'}\ge \frac{R|\ln R|^{d\delta}}{\lambda'}=1,$$
so	by \Cref{rem:equivalent-N-functions} we may instead bound the asymptotic form \eqref{eq:asymptotic-of-bar-phi-alpha}. Therefore, we estimate
	\begin{align*}
	\int_{R\le |z|\le |\ln R|^{-\delta}}\frac{R|z|^{-d}}{\lambda'}\ln\left(\frac{R|z|^{-d}}{\lambda'}\right)^{1/\alpha}\,dz
	&=|\ln R|^{-\beta}\int_{R\le |z|\le |\ln R|^{-\delta}}|z|^{-d}\ln(|z|^{-d}|\ln R|^{-\beta})^{1/\alpha}\,dz\\
	&= C|\ln R|^{-\beta}
	\int^{|\ln R|^{-\delta}}_R r^{-1}|\ln (r^{-d}|\ln R|^{-\beta})|^{1/\alpha}\,dr
	.\end{align*}
	Since for $r\leq |\ln R|^{-\delta}$ we have  $$|\ln (r^{-d}|\ln R|^{-\beta})|=\ln (r^{-d}|\ln R|^{-\beta})=d|\ln r|-\beta \ln |\ln R|\leq d|\ln r|,$$
	we infer that
	\begin{align*}
	\int_{R\le |z|\le |\ln R|^{-\delta}}\frac{R|z|^{-d}}{\lambda'}\ln
	\left(\frac{R|z|^{-d}}{\lambda'}\right)^{1/\alpha}\,dz
	&\le C|\ln R|^{-\beta} \int^{|\ln R|^{-\delta}}_R r^{-1}|\ln r|^{1/\alpha}\,dr\\
	&\le C|\ln R|^{-\beta}
	\left[-|\ln r|^{(1/\alpha)+1}\right]^{|\ln R|^{-\delta}}_R\\
	&\leq C,
	\end{align*}as we wanted,	
	and hence we obtain
	$$I_2\le C\norm{g}_{L_{\phi_{\alpha}}(\mathbb{R}^d)} R|\ln R|^{\beta}.$$
	\medskip
	Finally, putting this all together, we conclude that
	\begin{align*}
	&\int_{\mathbb{R}^d} \left|K(x-z)-K(y-z)\right||g(z)|\,dz \\
	&\quad \le I_1+I_2+I_3\le C(\norm{g}_{L^1}R+\norm{g}_{L_{\phi_\alpha}}R|\ln R|^{\beta}+\norm{g}_{L_{\phi_\alpha}}R|\ln R|^{1/\alpha})
	\end{align*}
	which implies the claim of the lemma.
\end{proof}

\subsection{Lagrangian formulation of the Vlasov-Poisson system and the Wasserstein distance}

Let  $f\in C([0,T],\mathcal{M}_+(\mathbb{R}^d\times \mathbb{R}^d)- w^\ast)$ be a weak measure-valued solution of the Vlasov-Poisson 
system \eqref{eq:Vlasov-Poisson} on $[0,T]$ such that $\rho\in L^\infty([0,T],L^1\cap L^p(\mathbb{R}^d))$ for some $p>d$. By  potential estimates it is well-known that   $E\in L^\infty([0,T]\times \mathbb{R}^d)$. Moreover, by Cald\'eron-Zygmund inequality (see e.g. see \cite[Theo. 4.12]{javier}) $\nabla E\in L^\infty([0,T],L^p(\mathbb{R}^d))$. By the theory on transport equations (see \cite[Theo. III2]{dip-lions} or 
\cite[Theo. 5.7]{ambrosio-crippa} for more recent results on the theory), there exists a unique Lagrangian flow associated to $E$, namely a  map $(X,V)\in L_{\text{loc}}^1([0,T]\times \mathbb{R}^d\times\mathbb{R}^d; \mathbb{R}^d\times \mathbb{R}^d)$ such that for a.e. $(x,v)\in \mathbb{R}^d \times \mathbb{R}^d$, $t\mapsto (X,V)(t,x,v)$ is an absolutely continuous integral solution of the characteristic system of ODE
\begin{equation}\label{eq:characteristics}
\begin{cases}
\displaystyle \dot{X}(t,x,v)=V(t,x,v),\quad X(0,x,v)=x\\
\displaystyle \dot{V}(t,x,v)=E(t,X(t,x,v)),\quad V(0,x,v)=v.
\end{cases}\end{equation}
Moreover, we have the representation\footnote{The $\#$ notation means that $f(t)(B)=f_0\left(((X,V)(t,\cdot,\cdot)^{-1}(B)\right)$ for all Borel set $B\subset \mathbb{R}^d$.}
\begin{equation}\label{representation}\forall t\in [0,T],\quad f(t)=(X,V)(t)_\#f_0.\end{equation}

\medskip

Let $f_1,f_2$ be two weak solutions of the Vlasov-Poisson equation \eqref{eq:Vlasov-Poisson} as in Theorem \ref{thm:main}, then $f_j(t)=(X_j(t),V_j(t))_\#f_{j0}$ for $(X_j,V_j)(t,x,v)$ the solutions to the characteristic equations \eqref{eq:characteristics} associated to $E_j$.

\begin{remark}\label{rem:Holder-flow}
	In fact, under the assumptions of Theorem \ref{thm:main}, the 
	characteristic flows are H\"older continuous as functions of $(x,v)$. 
	This may be deduced from a similar Gr\"onwall type estimate to the proof of Lemma \ref{lem:estimate-on-D} below  
	using that $E_i$ satisfy a log${}^2$-Lipschitz bound of the form \eqref{eq:log2-lipschitz-assumption}. This will not be needed for the proof of Theorem \ref{thm:main}.
\end{remark}

\medskip

 Given a coupling $\pi_0\in \Pi(f_{10},f_{20})$ (as defined in \Cref{def:W-1}) we define the following quantities:
 \begin{equation}\label{def:distance}
\begin{split}
&\mathcal{X}(t)=\mathcal{X}_{\pi_0}(t)=\int_{\R^{2d}\times \R^{2d}} |X_1(t,x,v)-X_2(t,y,w)|\,d\pi_0(x,v,y,w),\\
&\mathcal{V}(t)=\mathcal{V}_{\pi_0}(t)=\int_{\R^{2d}\times \R^{2d}} |V_1(t,x,v)-V_2(t,y,w)|\,d\pi_0(x,v,y,w).\\
\end{split}
\end{equation}

\medskip

By \eqref{representation}, {the measure $\pi_t=\left((X_1(t),V_1(t));(X_2(t),V_2(t))\right)_\#\pi_0$ belongs to $\Pi(f_1(t),f_2(t))$. Therefore,
by the  \Cref{def:W-1} of the Wasserstein distance, we have
\begin{equation}\label{carac:wasserstein}
W_1(f_1(t),f_2(t))\leq \inf_{\pi_0 \in \Pi(f_{10},f_{20})} \left(\mathcal{X}_{\pi_0}(t)+\mathcal{V}_{\pi_0}(t)\right),\quad t\in [0,T].
\end{equation}
On the other hand, note the converse estimate:
\begin{equation}\label{carac:wasserstein-initial}
\begin{split} \inf_{\pi_0 \in \Pi(f_{10},f_{20})} \left(\mathcal{X}_{\pi_0}(0)+\mathcal{V}_{\pi_0}(0)\right)
& \leq \inf_{\pi_0 \in \Pi(f_{10},f_{20})} \int_{\R^{2d}\times \R^{2d}} (|x-y|+|v-w|)\,d\pi_0(x,v,y,w)\\
& \leq \inf_{\pi_0 \in \Pi(f_{10},f_{20})} \int_{\R^{2d}\times \R^{2d}} \sqrt{2} |(x,v)-(y,w)|\,d\pi_0(x,v,y,w)\\
&=\sqrt{2}W_1(f_{1,0},f_{2,0}).\end{split}
\end{equation}}

We emphasize that the quantity which would lead to \eqref{ineq:loeper} in \cite{loeper} is instead
$$\left\{
\int_{\R^{2d}\times \R^{2d}} \left(|X_1(t,x,v)-X_2(t,y,w)|^2+|V_1(t,x,v)-V_2(t,y,w)|^2\right)\,d\pi_0(x,v,y,w)\right\}^{1/2},$$ since it controls the Wasserstein distance $W_2(f_1(t),f_2(t))$.

\subsection{Proof of Theorem \ref{thm:main} completed}

We will prove \Cref{thm:main} proper with the following lemma which controls {the distance involving the spatial characteristics, namely the quantity 
$\mathcal{X}(t)$.}

We recall that $\beta=1+(1/\alpha)$.
For a given $0<A<1/9$, we define the function $G_\alpha(t)=G_\alpha(t;A)=G_{\alpha}(t;A,c)$ as the solution to
\begin{align}\label{eq:G}
G'_\alpha(t;A,c)&=-cG_\alpha(t;A,c)^{\beta/2},& G_\alpha(0;A,c)&=\ln(1/A)
\end{align}
for times $t\in[0,T^*]$ where $T^*=T^*(\alpha,A,c)$ is the maximal time such that $G_\alpha(\cdot;A,c)> \ln(9)$ on 
$[0,T^\ast)$ (we set $T^\ast=T$ if $T^\ast$ is larger than $T$). Note that $G_\alpha(\cdot;A,c)$ is decreasing and  is explicitly given by
\begin{equation}\label{eq:G-explicit}
G_\alpha(t;A,c)=\begin{cases}\displaystyle
|\ln A|\exp(-ct)&\text{when }\alpha=1,\\
\displaystyle \left(| \ln A|^{1/\gamma}-c\gamma^{-1}t\right)^{\gamma} & \text{when }\alpha\ne1,
\end{cases}
\end{equation}
where $\gamma$ is given by \eqref{eq:gamma}.
Moreover, we have
\begin{equation}\label{eq:Tast}
 T^\ast(\alpha, A,c)=\begin{cases}\displaystyle \frac{1}{c}\ln\left(\frac{|\ln A|}{\ln 9}\right)\quad \text{when }\alpha=1,\\ 
 \displaystyle \frac{\gamma}{c}\left(|\ln A|^{1/\gamma}-(\ln 9)^{1/\gamma}\right)\quad \text{when }\alpha \neq 1.\end{cases} 
\end{equation}

\begin{lemma}\label{lem:estimate-on-D}
	Let $\pi_0\in \Pi(f_{10},f_{20})$ and \eqref{eq:assumption-on-rhos} hold. Assume that $$0\leq  A:=(1+T)(\mathcal{X}(0)+\mathcal{V}(0))<\frac{1}{9}.$$ 
	Then the following estimate holds
	\begin{equation*}
	\mathcal{X}(t)\le \exp(-G_\alpha(t;A(t),c)), \quad  t\leq T^\ast(\alpha, A,c),
	\end{equation*}where $$A(t)=(1+t)(\mathcal{X}(0)+\mathcal{V}(0))\leq A$$ and 
	where $c>0$ is a constant depending only on $\alpha$ and the norms in \eqref{eq:assumption-on-rhos}. 
\end{lemma}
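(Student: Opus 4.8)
The plan is to derive a closed second-order differential inequality for the pair $(\mathcal{X}(t),\mathcal{V}(t))$ and then reduce it to the scalar ODE \eqref{eq:G} by a Grönwall-type comparison argument, exactly in the spirit of Dobrushin but exploiting the second-order structure of \eqref{eq:characteristics}. First, from the characteristic equations I would differentiate under the integral sign: $\dot{\mathcal{X}}(t)\le \mathcal{V}(t)$ directly from $\dot X_j = V_j$, and $\dot{\mathcal{V}}(t)\le \int |E_1(t,X_1)-E_2(t,X_2)|\,d\pi_0$. The force-field difference splits as $|E_1(t,X_1)-E_2(t,X_2)| \le |E_1(t,X_1)-E_1(t,X_2)| + |(E_1-E_2)(t,X_2)|$. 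The first term is handled by \Cref{lem:harmonic-analysis} applied to $\rho_1(t)$, giving a bound $C\psi_\alpha(|X_1-X_2|)$; integrating against $\pi_0$ and using that $\psi_\alpha(\tau)=\tau|\ln\tau|^\beta$ is concave on $[0,1/9]$, Jensen's inequality yields $\le C\,\mathcal{X}(t)\,|\ln \mathcal{X}(t)|^\beta$ (once we know $\mathcal{X}(t)$ is small enough, which is part of what the proof must track). The second term $\int|(E_1-E_2)(t,X_2)|\,d\pi_0 = \int |(E_1-E_2)(t,z)|\,\rho_2(t,z)\,dz$ (using the pushforward representation \eqref{representation}) is the genuinely ``stability'' term: one writes $(E_1-E_2)(t,z) = \int K(z-z')(\rho_1(t,z')-\rho_2(t,z'))\,dz'$ and estimates it via a transport/coupling argument — move $\rho_1(t,\cdot)$ to $\rho_2(t,\cdot)$ along $\pi_t$ and apply the $W_1$-type bound, which through \Cref{lem:harmonic-analysis} (or its concave envelope) again produces something of the form $C\,\mathcal{X}(t)\,|\ln \mathcal{X}(t)|^\beta$ plus harmless lower-order contributions controlled by $\mathcal{X}+\mathcal{V}$.

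The next step is to combine these into a system. Writing $D(t)=\mathcal{X}(t)+\mathcal{V}(t)$, the above gives, schematically, $\dot{\mathcal{X}}\le \mathcal{V}$, $\dot{\mathcal{V}}\le C\,\mathcal{X}|\ln\mathcal{X}|^\beta + (\text{l.o.t.})$, hence $\ddot{\mathcal{X}}\le C\,\mathcal{X}|\ln\mathcal{X}|^\beta$ up to lower-order terms, or more robustly $D(t)\le C\big(D(0) + \int_0^t (t-s)\,\mathcal{X}(s)|\ln\mathcal{X}(s)|^\beta\,ds\big)$ after integrating twice. The factor $(1+T)$ in the definition of $A$ and in $A(t)=(1+t)(\mathcal{X}(0)+\mathcal{V}(0))$ is precisely there to absorb the two time integrations of the initial data $D(0)$. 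The key analytic point is that the function $h(\tau)=\tau|\ln\tau|^\beta$ is an Osgood-type modulus: $\int_0 d\tau/h(\tau)=\int_0 d\tau/(\tau|\ln\tau|^\beta) <\infty$ when $\beta>1$ and gives a double-exponential when $\beta=1$. To convert the second-order inequality into the first-order comparison ODE \eqref{eq:G}, I would substitute $u(t)=|\ln \mathcal{X}(t)|$ (so $\mathcal{X}=e^{-u}$) and show that $u$ satisfies, from below, $u'\ge -c\,u^{\beta/2}$ — the exponent $\beta/2$ appearing because the second-order structure effectively takes a square root of the first-order rate (this is the mechanism that turns Loeper's $e^{e^{ct}}$ into $e^{ct^2}$ when $\alpha=\infty$, i.e. $\beta=1$, and more generally produces $\gamma=2/(1-1/\alpha)$). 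Comparing $u$ with the solution $G_\alpha$ of \eqref{eq:G} with data $G_\alpha(0;A(t),c)=\ln(1/A(t))$ and invoking the explicit formula \eqref{eq:G-explicit} then yields $\mathcal{X}(t)\le e^{-G_\alpha(t;A(t),c)}$ on $[0,T^*]$.

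The main obstacle — and the step requiring the most care — is making the passage from the pointwise/integral bounds to the \emph{clean} scalar comparison rigorous, in particular: (i) justifying the use of Jensen's inequality with the concave modulus $\psi_\alpha$ throughout the whole interval $[0,T^*]$, which requires a bootstrap/continuity argument showing that $\mathcal{X}(t)$ (indeed $D(t)$) stays below $1/9$ for as long as claimed — this is exactly why $T^*$ is defined as the maximal time on which $G_\alpha>\ln 9$, i.e. the first time the right-hand side threatens to reach $1$; (ii) correctly tracking the lower-order terms proportional to $D$ itself (not $h(D)$), which after the double time-integration contribute the $(1+t)D(0)=A(t)$ growth and must be shown not to dominate; and (iii) handling the exponent bookkeeping so that the modulus is genuinely $h(\tau)=\tau|\ln\tau|^\beta$ with $\beta=1+1/\alpha$ and the resulting ODE has the stated exponent $\beta/2$. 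Once the differential inequality $\ddot{\mathcal{X}}\lesssim \mathcal{X}|\ln\mathcal{X}|^\beta$ (with the right treatment of initial data) is in hand, the remaining reduction to \eqref{eq:G} and the explicit solution formulas \eqref{eq:G-explicit}, \eqref{eq:Tast} are a routine ODE comparison.
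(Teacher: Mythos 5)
Your proposal is correct and follows essentially the same route as the paper: the same splitting of $E_1(X_1)-E_2(X_2)$ into a piece controlled by \Cref{lem:harmonic-analysis} with $\rho_1$ and a coupling piece pushed onto $\rho_2$ via $\pi_0$, Jensen with the concave modulus $\psi_\alpha$, the doubly time-integrated inequality with data $(1+t)(\mathcal{X}(0)+\mathcal{V}(0))$, and reduction to the comparison ODE \eqref{eq:G} together with the bootstrap keeping $\mathcal{X}\le 1/9$ up to $T^\ast$. The only step you assert rather than derive is the passage from $\ddot{\mathcal{X}}\lesssim \psi_\alpha(\mathcal{X})$ to the exponent $\beta/2$; the paper obtains it by fixing $t_0$, forming the nondecreasing majorant $\mathcal{F}$ with $\mathcal{F}'(0)=0$, multiplying $\mathcal{F}''\le C_0\varphi_\alpha'(\mathcal{F})$ by $\mathcal{F}'$ and integrating, and using $\varphi_\alpha(\tau)\lesssim\tau^2|\ln\tau|^\beta$.
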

\begin{proof}
By integrating the characteristic ODEs \eqref{eq:characteristics} twice we have
	\begin{equation}\label{eq:integrated-twice-characteristics}
	\begin{aligned}
	&X_1(t,x,v)-X_2(t,y,w)\\
	&\qquad=x-y+(v-w)t+\int^t_0\int^s_0 \left[E_1(\tau,X_1(\tau,x,v))-E_2(\tau,X_2(\tau,y,w))\right]\,d\tau ds.
	\end{aligned}
	\end{equation}  
	Since $f_j(t)=(X_j(t),V_j(t))\#f_{j0}$, we can evaluate the fields $E_j$ as follows, where we omit the $\tau$ dependence for brevity:
	\begin{align*}
	&E_1(X_1(x,v))-E_2(X_2(y,w))\\
	&\quad=\int_{\R^{2d}} K(X_1(x,v)-X_1(x_0,v_0))f_{10}(x_0,v_0)\,dx_0dv_0\\
	&\qquad -\int_{\R^{2d}} K(X_2(x,v)-X_2(y_0,w_0))f_{20}(y_0,w_0)\,dy_0dw_0\\
	&\quad=\int_{\R^{2d}\times \R^{2d}} [K(X_1(x,v)-X_1(x_0,v_0))-K(X_2(x,v)-X_2(y_0,w_0))]\,d\pi_0(x_0,v_0,y_0,w_0)\\
	&\quad=\int_{\R^{2d}\times \R^{2d}} [K(X_1(x,v)-X_1(x_0,v_0))-K(X_2(x,v)-X_1(x_0,v_0))]\,d\pi_0(x_0,v_0,y_0,w_0)\\
	&\qquad+\int_{\R^{2d}\times \R^{2d}} [K(X_2(x,v)-X_1(x_0,v_0))-K(X_2(x,v)-X_2(y_0,w_0))]\,d\pi_0(x_0,v_0,y_0,w_0)\\
	&\quad=\int_{\R^{2d}\times \R^{2d}} [K(X_1(x,v)-z)-K(X_2(x,v)-z)]\rho_1(z)\,dz\\
	&\qquad+\int_{\R^{2d}\times \R^{2d}} [K(X_2(x,v)-X_1(x_0,v_0))-K(X_2(x,v)-X_2(y_0,w_0))]\,d\pi_0(x_0,v_0,y_0,w_0).
	\end{align*}
	Thus, by applying \Cref{lem:harmonic-analysis} we obtain the estimate
	\begin{align*}
	|E_1&(X_1(x,v))-E_2(X_2(y,w))|\le C\psi_\alpha(|X_1(x,v)-X_2(x,v)|)\\
	&+\int_{\R^{2d}\times \R^{2d}}| K(X_2(x,v)-X_1(x_0,v_0))-K(X_2(x,v)-X_2(y_0,w_0))|\,d\pi_0(x_0,v_0,y_0,w_0).
	\end{align*}
	It follows that
		\begin{align*}
&\int_{\R^{2d}\times \R^{2d}}|E_1(X_1(x,v))-E_2(X_2(y,w))|d\pi_0(x,v,y,w)\\
&\le C \int_{\R^{2d}\times \R^{2d}} \psi_\alpha(|X_1(x,v)-X_2(x,v)|)\,d\pi_0(x,v,y,w)\\
&+\int_{\R^{2d}\times \R^{2d}}\left(\int_{\R^{2d}\times \R^{2d}}|K(X_2(x,v)-X_1(x_0,v_0))-K(X_2(x,v)-X_2(y_0,w_0))|\,d\pi_0(x_0,v_0,y_0,w_0)\right)d\pi_0(x,v,y,w)\\
&\le C \int_{\R^{2d}\times \R^{2d}} \psi_\alpha(|X_1(x,v)-X_2(x,v)|)\,d\pi_0(x,v,y,w)\\
&+\int_{\R^{2d}\times \R^{2d}}\left(\int_{\R^{2d}\times \R^{2d}}|K(z-X_1(x_0,v_0))-K(z-X_2(y_0,w_0))|\rho_2(\tau,z)\,dz\right)d\pi_0(x_0,v_0,y_0,w_0)
	\end{align*}where we have exchanged
 the order of integration with $d\pi_0(x_0,v_0,y_0,w_0)$ and used that $f_1(\tau)=(X_1(\tau),V_1(\tau))\#f_{10}$ in the last inequality.
Therefore, by integrating \eqref{eq:integrated-twice-characteristics} against the measure $d\pi_0(x,v,y,w)$ we obtain	
	\begin{align*}
	\mathcal{X}(t)&\le \int_{\R^{2d}\times \R^{2d}} |x-y+t(v-w)|\,d\pi_0(x,v,y,w)\\
	&+\int_0^t\int_0^s \int_{\R^{2d}\times \R^{2d}}|E_1(\tau,X_1(\tau,x,v))-E_2(\tau,X_2(\tau,y,w))|d\pi_0(x,v,y,w)\,d\tau\,ds\\
	&\le [\mathcal{X}(0)+t\mathcal{V}(0)]+2C\int^t_0\int^s_0\int_{\R^{2d}\times \R^{2d}} \psi_\alpha(|X_1(\tau,x,v)-X_2(\tau,x,v)|)\,d\pi_0(x,v,y,w)\,d\tau\, ds\\
	\end{align*}
	where we have applied \Cref{lem:harmonic-analysis} (noting \Cref{rem:alpha-infinity-case} if $\alpha=\infty$) to find the second inequality.
	Using that $\psi_\alpha$ is concave we deduce that 
	\begin{align*}
	\mathcal{X}(t)
	&\le (\mathcal{X}(0)+\mathcal{V}(0)(1+t)+C_0\int^t_0\int^s_0\psi_\alpha(\mathcal{X}(\tau))\,d\tau \,ds
	\end{align*}
	for a constant $C_0$ depending only on $\alpha$ and the norms in \eqref{eq:assumption-on-rhos}.
	For a constant $c$ to be determined later on, let $T^\ast(\alpha, A,c)$ be the corresponding time defined by \eqref{eq:Tast}.
	Let $t_0\in [0,T^\ast(\alpha,A,c)]$ be fixed and set
	\begin{equation*}
	\mathcal{F}(t)=(\mathcal{X}(0)+\mathcal{V}(0)(1+t_0)+C_0\int^t_0\int^s_0\psi_\alpha(\mathcal{X}(\tau))\,d\tau\, ds\ge \mathcal{X}(t),\quad t\in [0,t_0].
	\end{equation*} Define $\varphi_\alpha(t)=\int^t_0\psi_\alpha(s)\,ds$ and note that 
$\varphi_\alpha(\tau)\le C \tau^2|\ln(
\tau)|^\beta$ for $\tau\le 1/9$ and  $\varphi(\tau)\leq C\tau$ for $\tau \geq 1/9$. Then it holds that
	\begin{equation*}
	\mathcal{F}(0)=(\mathcal{X}(0)+\mathcal{V}(0)(1+t_0), \quad \mathcal{F}'(0)=0,\quad \mathcal{F}''(t)
	=C_0\psi_\alpha(\mathcal{X}(t))\le C_0\psi_\alpha(\mathcal{F}(t))=C_0\varphi_\alpha'(\mathcal{F}(t))
	\end{equation*}
	and $\mathcal{F}'(t)\ge0$. Thus 
	\begin{equation*}
	[(\mathcal{F}'(t))^2]'=2\mathcal{F}''(t)\mathcal{F}'(t)\le 2C_0\varphi_\alpha'(\mathcal{F}(t))\mathcal{F}'(t)=2C_0[\varphi_\alpha(\mathcal{F}(t))]'
	\end{equation*}
	and by integrating we deduce that
	\begin{equation*}
	\mathcal{F}'(t)\le \sqrt{2C_0\,\varphi_\alpha(\mathcal{F}(t))},\quad \text{for }t\leq t_0,
	\end{equation*} 
	which by definition of $\varphi_\alpha$ implies \begin{equation}\label{eq:differential-inequality-for-F}
	\mathcal{F}'(t)\le C_1\,\mathcal{F}(t)|\ln \mathcal{F}(t)|^{\beta/2},\quad \text{for }\mathcal{F}(t)\leq \frac{1}{9},
	\end{equation} 
	where $C_1$ depends only on $\alpha$ and the norms in \eqref{eq:assumption-on-rhos}. Now let
	 $y(t)$ be the solution to 
	\begin{equation*}
	y'(t)= C_1 y(t)|\ln y(t)|^{\beta/2}, \quad  y(0)=\mathcal{F}(0)=A(t_0),
	\end{equation*}
for $t\in [0,T^\ast(\alpha,A,c)]$. In view of the definition \eqref{eq:Tast}, since $A(t_0)\leq A$ we have $T^\ast(\alpha,A(t_0),C_1)\geq T^\ast(\alpha, A,C_1)$ so that
 $y\leq 1/9$ on $[0,T^\ast(\alpha,A,C_1)]$. 
Then \eqref{eq:differential-inequality-for-F} obeys $\mathcal{F}(t)\leq y(t)\leq 1/9$ on its domain of definition. By applying the change of variables $y=e^{-G}$  
 we deduce that $G'=-C_1G^{\beta/2}$ and that therefore
 $$\mathcal{F}(t)\leq y(t)=\exp(-G_\alpha(t,A(t_0),C_1)),\quad t\in [0,t_0],$$
and as $t_0$ was arbitrary the proof of the lemma is complete by setting $c=C_1$.
	\end{proof}

Using this lemma we are now able to prove the main result \Cref{thm:main}.
\begin{proof}[Proof of \Cref{thm:main}]
	By integrating the characteristic equation \eqref{eq:characteristics} once we obtain
	\begin{equation}
	V_1(t,x,v)-V_2(t,y,w)=v-w+\int^t_0 \left[E_1(s,X_1(s,x,v))-E_2(s,X_2(s,y,w))\right]\,ds.
	\end{equation}
	Letting $\pi_0\in \Pi(f_{10},f_{20})$ be arbitrary, in the same way as in the proof of \Cref{lem:estimate-on-D} we find that
	\begin{equation*}
	\mathcal{V}(t)\le \mathcal{V}(0)+C\int^t_0\psi_\alpha(\mathcal{X}(s))\,ds.
	\end{equation*}
	By \eqref{carac:wasserstein-initial}, we may consider only couplings $\pi_0$ such that $\mathcal{X}(0)+\mathcal{V}(0)\leq 2 W_1(f_{1}(0),f_{2}(0))$ and, therefore,
	by assumption on $W_1(f_{1}(0),f_{2}(0))$ in \Cref{thm:main} 
$$A:=(\mathcal{X}(0)+\mathcal{V}(0))(1+T)< \frac{1}{9}.$$ So we also have $\mathcal{X}(t)\le \exp(-G_\alpha(t,A(t),c))$ with 
$A(t)=(\mathcal{X}(0)+\mathcal{V}(0))(1+t)$ by \Cref{lem:estimate-on-D} and for $t\leq T^\ast(\alpha,A,c)$. Note that by definition \eqref{eq:Tast} of the time $T^\ast$, since
$W_1(f_{1}(0),f_{2}(0))(1+T)\leq A$ we have 
 $$T^\ast(\alpha, A, c)\geq T^\ast:= T^\ast(\alpha, W_1(f_{1}(0),f_{2}(0))(1+T),c).$$ Thus all the subsequent estimates hold for $t\in [0,T^\ast]$.
Thus, since $\psi_\alpha$ is an increasing function, we obtain, dropping the $\alpha,A$ and $c$ in $G$  for brevity,
	\begin{align*}
	\int^t_0\psi_\alpha(\mathcal{X}(s))\,ds&\le \int^t_0\psi_\alpha(\exp(-G(s)))\,ds=\int^{t}_0\exp(-G(s))G(s)^{\beta}\,ds\\
	&\le \int^{t}_0\exp(-G(t))G(0)^{\beta}\,ds=t\exp(-G(t))G(0)^{\beta}\\
	&=t\exp(-G(t))|\ln A(0)|^\beta,
	\end{align*}
	 where we have used that $s\mapsto G(s)=G_\alpha(s,A(s),c)$ is a decreasing function of $s$.
	
	Combining the estimates for $\mathcal{X}$ and $\mathcal{V}$ we have
	\begin{align*}
	\mathcal{X}(t)&+\mathcal{V}(t)\\&\le \exp(-G_\alpha(t;A(t),c))+\mathcal{V}(0)+Ct\exp(-G_\alpha(t;A(t),c))|\ln A(0)|^\beta\\
	&\leq \exp(-G_\alpha(t;A(t),c))+A(t)+Ct\exp(-G_\alpha(t;A(t),c))|\ln A(0)|^\beta\\
	&= \exp(-G_\alpha(t;A(t),c))+\exp(-G_\alpha(0,A(t),c))+Ct\exp(-G_\alpha(t;A(t),c))|\ln A(0)|^\beta\\
	&\leq 2 \exp(-G_\alpha(t;A(t),c))+Ct\exp(-G_\alpha(t;A(t),c))|\ln A(0)|^\beta
	\end{align*}
	where we have used that $t\mapsto G_\alpha(t,\overline{A},c)$ is decreasing for fixed $\overline{A}$ in the last line.
	Thus for $t\in [0,T^\ast]$ we obtain
	\begin{align*}
	\mathcal{X}(t)+\mathcal{V}(t)&	
	\leq \exp(-G_\alpha(t;A(t),c))
	(2+C\,t|\ln A(0)|^\beta).
	\end{align*}
	We set $B=W_1(f_{1}(0),f_{2}(0))$, so that $B\leq A(0)\leq 2 B$.
	By taking the infimum over couplings $\pi_0$ (recall \eqref{carac:wasserstein} and \eqref{carac:wasserstein-initial}) we obtain
	\begin{equation*}
	W_1(f_1(t),f_2(t))\le \exp(-G_\alpha(t;2(1+t)B))(2+C\, t|\ln B|^\beta).
	\end{equation*}

	Now suppose $\alpha=1$, then by the explicit formula \eqref{eq:G-explicit} we have (recalling that $\beta=2$ in this case)
	\begin{align*}
	W_1(f_1(t),f_2(t))&\le \exp(\ln(2(1+t)B)\exp(-ct))(2+C\,t\,|\ln B|^2)\\
	&\le (2(1+t)B)^{\exp(-ct)}(2+C\,t|\ln B|^2)\\
	&\le CB^{\exp(-ct)}(1+t|\ln B|^2)
	\end{align*}
	where we have used that $(2(1+t))^{\exp(-ct)}$ is bounded by a constant uniformly over $t\in[0,\infty)$.
	
	Suppose instead that $\alpha\ne1$, then 
	\begin{align*}
		W_1(f_1(t),f_2(t))&\le \exp(-G_\alpha(t;A(t),c))(2+C\,t|\ln B|^\beta)\\
		&\le \exp(\gamma^{-1}\ln(2B(1+t))+Ct^\gamma)(2+C\,t|\ln B|^\beta)\\
		&\le (2B(1+t))^{1/\gamma}\exp(Ct^\gamma)(2+C\,t |\ln B|^\beta)\\
		&\le CB^{1/\gamma}\exp(Ct^\gamma)(1+t|\ln B |^\beta)
	\end{align*}
	where on the last line we have used that $e^{Ct^\gamma}t^\delta\le Ce^{C't^\gamma}$ for a larger constant $C'>C$, and on the second line we have used the lower bound
	\begin{equation}
	G_\alpha(t;A,c)\ge\gamma^{-1}\ln(1/A)-Ct^\gamma 
	\end{equation}
	for $\alpha\ne1$, which we will now prove. Indeed, from \eqref{eq:G-explicit} we use convexity (noting $\gamma\ge2$) to obtain
	\begin{equation*}
	G(t)\ge G(0)-tG'(0)=G(0)-CtG(0)^{\beta/2},
	\end{equation*}
	and the desired bound follows from an application of Young's inequality, i.e. $G(0)^{\beta/2}t\le \gamma^{-1}t^\gamma+(\beta/2) G(0)$.

Finally, we infer the lower bound for $T^\ast=T^\ast(\alpha, B(1+T),c)$ as follows: since $(1+T)^{1+\varepsilon} B\leq 1$, 
	we have $$|\ln B(1+T)|\geq \frac{\varepsilon}{1+\varepsilon} |\ln B|.$$ So in view of \eqref{eq:Tast}, we have for $\alpha=1$
\begin{equation*}
 T^\ast\geq \frac{1}{c}\left(\ln |\ln B|
 +\ln \left(\frac{\varepsilon}{1+\varepsilon}\right)-\ln \ln 9\right)\geq C' \ln |\ln B|-C'^{-1}, 
\end{equation*}
and for $\alpha>1$
\begin{equation*}
 T^\ast\geq  \frac{\gamma}{c}\left(\left(\frac{\varepsilon}{1+\varepsilon}\right)^{1/\gamma}
 |\ln B|^{1/\gamma}-(\ln 9)^{1/\gamma}\right)\geq C'\gamma |\ln B|^{1/\gamma}-C'^{-1}
\end{equation*}
	for sufficiently large constant $C'$.
\end{proof}

\subsection{Proof of \Cref{thm:log2-lipschitz}}\label{subsec:proof-of-log2-lipschitz}

To prove \Cref{thm:log2-lipschitz} we note that we have the following result, analogous to \Cref{lem:harmonic-analysis}. As its proof is immediate we omit it.
\begin{lemma}
Let $K$ be bounded and satisfy \eqref{eq:log2-lipschitz-assumption}, then for any $\mu\in \mathcal{M}_+(\mathbb{R}^d)$ with mass $m$, we have the inequality
\begin{equation*}
\int_{\mathbb{R}^d}|K(x-z)-K(y-z)|d\mu(z)\le Cm\psi_{1}(|x-y|)
\end{equation*}
where $C$ is the constant in \eqref{eq:log2-lipschitz-assumption} and $\psi_1$ is defined by \eqref{eq:psi-alpha}.
\end{lemma}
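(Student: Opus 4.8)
The plan is to prove this by a direct case split on $R:=|x-y|$, which is much simpler than the proof of \Cref{lem:harmonic-analysis}: since the hypothesis \eqref{eq:log2-lipschitz-assumption} is already a pointwise log${}^2$-Lipschitz bound and $\mu$ is only assumed to be a finite positive measure, there is no singularity of $K$ that needs to be absorbed and hence no delicate decomposition of the domain is required.

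First I would treat the regime $R\le 1/9$. Here, for every $z\in\R^d$ the two points $x-z$ and $y-z$ satisfy $|(x-z)-(y-z)|=|x-y|=R\le 1/9$, so \eqref{eq:log2-lipschitz-assumption} applies verbatim to this pair and gives $|K(x-z)-K(y-z)|\le C R(\ln R)^2=C\psi_1(R)$ pointwise in $z$, where $C$ is the constant of \eqref{eq:log2-lipschitz-assumption} and we used that $\psi_1(R)=R|\ln R|^2$ on $[0,1/9]$. Integrating this pointwise bound against $\mu$ and using $\mu(\R^d)=m$ yields $\int_{\R^d}|K(x-z)-K(y-z)|\,d\mu(z)\le Cm\psi_1(R)$, which is the claim in this regime.

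Second, for $R\ge 1/9$ I would instead use that $K$ is bounded: $|K(x-z)-K(y-z)|\le 2\norm{K}_{L^\infty}$ for all $z$, whence the integral is at most $2\norm{K}_{L^\infty}m$. Since $\psi_1(R)=\tfrac19(\ln 9)^2$ is a fixed positive constant on $[1/9,\infty)$, this is again of the form $Cm\psi_1(R)$ provided the constant $C$ is enlarged, if necessary, to also dominate $18\norm{K}_{L^\infty}/(\ln 9)^2$. The only (very minor) point requiring care in the whole argument is precisely this bookkeeping of the constant in the far regime, where one falls back on the boundedness of $K$ rather than on \eqref{eq:log2-lipschitz-assumption} itself; there is no genuine obstacle, which is why the statement can safely be quoted without a written proof.
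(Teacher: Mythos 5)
Your argument is correct and is precisely the ``immediate'' proof the paper had in mind when omitting it: integrate the pointwise log${}^2$-Lipschitz bound against $\mu$ for $|x-y|\le 1/9$, and fall back on the boundedness of $K$ for $|x-y|\ge 1/9$. Your remark that the constant must then also dominate a multiple of $\norm{K}_{L^\infty}$ (so it is not literally only the constant in \eqref{eq:log2-lipschitz-assumption}) is a fair and harmless sharpening of the statement's phrasing.
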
{Furthermore, we note that due to this lemma the vector fields $E_i$ are log${}^2$-Lipschitz, and as noted in \Cref{rem:Holder-flow} this is enough to define the characteristic ODEs.} The proof of \Cref{thm:log2-lipschitz} is now entirely analogous to the proof of \Cref{thm:main} for $\alpha=1$, replacing \Cref{lem:harmonic-analysis} with this lemma. Thus we leave it to the reader.

\section{Proof of \texorpdfstring{\Cref{prop:propagation-of-rho-bound}}{Proposition 1.1}}\label{sec:3}
We first show that it is sufficient to propagate the exponential velocity moment.
\begin{lemma}\label{lem:exp-rho-bounded-by-velocity-moment}
	Let $f\in L^\infty(\mathbb{R}^d\times\mathbb{R}^d)$ and $c>0$, then
	\begin{equation*}
	\int_{\R^d} \exp\left(\left|\int_{\R^d} f(x,v)\,dv\right|^\alpha/\lambda\right)\,dx\le C\int_{\R^d\times \R^d} f(x,v)e^{c\<v^{d\alpha}}\,dxdv
	\end{equation*} 
	for constants $C,\lambda$ depending only upon $\norm{f}_{L^\infty}$ and $c$.
\end{lemma}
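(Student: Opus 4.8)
The plan is to bound the spatial density $\rho(x) = \int f(x,v)\,dv$ pointwise by an expression that makes the exponential integrable, then use convexity of the exponential to reduce to the velocity-moment integral. First I would get a pointwise bound on $\rho$: since $0 \le f \le \norm{f}_{L^\infty}$, for any radius $r>0$ we split
\begin{equation*}
\rho(x) = \int_{|v|\le r} f(x,v)\,dv + \int_{|v|> r} f(x,v)\,dv \le C\norm{f}_{L^\infty} r^d + e^{-c\<r^{d\alpha}}\int_{\R^d} f(x,v)e^{c\<v^{d\alpha}}\,dv,
\end{equation*}
using $\<v \ge \<r$ on the second region. Writing $g(x) = \int f(x,v)e^{c\<v^{d\alpha}}\,dv$, optimizing (or just making a convenient choice of) $r$ as a function of $g(x)$ — roughly $r \sim (\ln\<{g(x)})^{1/(d\alpha)}$ up to constants depending on $c$ — yields a bound of the shape $\rho(x)^\alpha \le C\big(1 + \ln(1+g(x))\big)$ with $C$ depending only on $\norm{f}_{L^\infty}$ and $c$; here one should be slightly careful to absorb the polynomial prefactor $r^d = (\ln\<{g(x)})^{1/\alpha}$ into the logarithm at the cost of enlarging the constant, since $(\ln s)^{1/\alpha} \le C_\varepsilon \ln s$ for $s$ large and $(\ln s)^{1/\alpha}$ is bounded for $s$ bounded.

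Next I would exponentiate. From $\rho(x)^\alpha/\lambda \le \tfrac{C}{\lambda}\big(1 + \ln(1+g(x))\big)$, choosing $\lambda = \lambda(\norm{f}_{L^\infty},c)$ large enough that $C/\lambda \le 1$ gives
\begin{equation*}
\exp\!\left(\rho(x)^\alpha/\lambda\right) \le e^{C/\lambda}\,(1+g(x)) \le C'(1 + g(x)).
\end{equation*}
Integrating in $x$, the term $\int_{\R^d} 1\,dx$ is of course infinite, so this naive version fails — which signals that one must instead use that the left-hand integrand is really $\exp(\cdots) - 1$ implicitly, or equivalently track the Luxemburg-norm formulation. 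The cleaner route: since we want $\int_{\R^d}\big(\exp(\rho(x)^\alpha/\lambda)-1\big)\,dx$ finite (this is what controls $\norm{\rho}_{L_{\phi_\alpha}}$ via Remark 1.1 and 1.4), use the elementary inequality $e^t - 1 \le t e^t$ together with the bound on $\rho(x)^\alpha$ to get $\exp(\rho(x)^\alpha/\lambda) - 1 \le (\rho(x)^\alpha/\lambda)e^{\rho(x)^\alpha/\lambda} \le C'' \rho(x)^\alpha (1+g(x))/\lambda$, and then bound $\rho(x)^\alpha \le C(1+g(x))$ crudely once more; actually it is simplest to choose $\lambda$ so large that $\rho(x)^\alpha/\lambda \le \ln(1+g(x))$, giving $\exp(\rho(x)^\alpha/\lambda) - 1 \le g(x)$ directly, since $e^{\ln(1+g)} - 1 = g$. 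Then $\int_{\R^d}\big(\exp(\rho(x)^\alpha/\lambda)-1\big)\,dx \le \int_{\R^d} g(x)\,dx = \int_{\R^d\times\R^d} f(x,v)e^{c\<v^{d\alpha}}\,dx\,dv$, which is the claimed estimate (with the stated constants and the harmless identification of $\int(e^{(\cdot)}-1)$ with the quantity relevant to the Orlicz norm).

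The main obstacle — really the only subtle point — is handling the polynomial-times-logarithm interplay in the pointwise bound on $\rho$: the truncation at radius $r$ produces a $\norm{f}_{L^\infty}r^d$ term whose optimal $r$ grows like $(\ln\<{g(x)})^{1/(d\alpha)}$, so $r^d$ grows like $(\ln\<{g(x)})^{1/\alpha}$, and one needs $\rho(x)^\alpha$, not $\rho(x)$, to come out linear in $\ln(1+g(x))$. Raising to the power $\alpha$ turns $(\ln(1+g))^{1/\alpha}$ into $\ln(1+g)$ exactly, which is the crucial algebraic cancellation; one just has to be careful with cross terms when expanding $(a+b)^\alpha$ for $\alpha \ge 1$ (use $(a+b)^\alpha \le 2^{\alpha-1}(a^\alpha + b^\alpha)$) and with the behaviour of $g(x)$ near $0$, where the $r^d$ term dominates and everything is trivially bounded. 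Everything else is bookkeeping of constants depending on $\norm{f}_{L^\infty}$ and $c$, and an application of Fubini to rewrite $\int g\,dx$ as the stated double integral.
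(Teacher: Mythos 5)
Your argument is essentially the paper's own proof: the same velocity truncation at radius $R\sim(\ln(1\vee g(x)))^{1/(d\alpha)}$ via Markov's inequality and the $L^\infty$ bound, the same pointwise estimate $\rho(x)\le 1+C(\ln(1\vee g(x)))^{1/\alpha}$, and the same choice of a large constant $\lambda$ before integrating in $x$ and applying Fubini. Your remark that the conclusion should really be read with $\exp(\cdot)-1$ (equivalently, as the bound on the quantity controlling the Luxemburg norm via Remark 1.1, handling separately the region where $g$ is small) is a correct reading of how the lemma is used, and it repairs the same harmless abuse that appears in the paper's own statement and proof.
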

\begin{proof}
	We apply the usual `interpolation' method: let
	\begin{equation*}
	M(x)=\int_{\R^d} f(x,v)e^{c\<v^{d\alpha}}\,dv,
	\end{equation*}
	then for each $x$ we have
	\begin{equation*}
	\rho(x):=\int_{\R^d} f(x,v)\,dv\le \int_{|v|\ge R} f(x,v)\,dv+C\norm{f}_{L^\infty}R^d\le e^{-cR^{d\alpha}}M(x)+CR^d,
	\end{equation*}
	by Markov's inequality. We now choose $R=R(x)=c^{-1/(\alpha d)}\ln(1\vee M(x))^{1/(d\alpha)}$ which gives
	\begin{equation*}
	\rho(x)\le 1+C\ln(1 \vee M(x))^{1/\alpha}.
	\end{equation*}
	Thus,
	\begin{align*}
	\int_{\R^d} \exp(|\rho(x)|^\alpha/\lambda)\,dx&\le \int_{\R^d} \exp(|1+C\ln(1\vee M(x))^{1/\alpha}|^\alpha/\lambda)\,dx\\
	&\le \int_{\R^d}\exp((C/\lambda)(1+\ln(1\vee M(x))))\,dx
	\end{align*}
	and choosing $\lambda=C$ we have
	\begin{equation*}
	\int_{\R^d} \exp(|\rho(x)|^\alpha/\lambda)\,dx\le C\int_{\R^d} M(x)\vee 1\,dx\le C\int_{\R^d} f(x,v)e^{c\<v^{d\alpha}}\,dxdv.\qedhere 
	\end{equation*}
\end{proof}
We now prove that the exponential moment is propagated. Since $f_0$ has finite velocity moments of order larger than $d^2-d$, the solution provided by 
\cite[Theo. 1]{lions-perthame} has bounded velocity moments of order larger than $d^2-d$ on $[0,T]$. By \cite[Cor. 2]{lions-perthame} it follows that
\begin{equation}\label{eq:E-in-L-infty}
\sup_{t\in[0,T]}\norm{E(t)}_{L^\infty}\le C<\infty 
\end{equation}
for any finite $T$. 
\begin{lemma}\label{lem:differential-inequality-for-M}
	Let $f\in L^\infty(\mathbb{R}^{d}\times \mathbb{R}^{d})$, $\alpha\ge1$ and $c>0$. Define 
	\begin{equation*}
	M(t)=\int_{\R^d\times \R^d} f(t,x,v)e^{1+c\<v^{d\alpha}}\,dx\,dv.
	\end{equation*}
	Then we have the differential inequality along the Vlasov-Poisson flow
	\begin{equation*}
	\frac{dM}{dt}\le C \big[1+\big(\ln M(t)\big)^{1-\tfrac{1}{d\alpha}}M(t)\big]
	\end{equation*}
	for a constant $C$ depending only upon $c,\alpha$ and $\norm{f}_{L^\infty}$.
\end{lemma}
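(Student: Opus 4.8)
The plan is to differentiate $M$ along the Vlasov--Poisson flow, integrate by parts, and then split the velocity integral at a scale tied to $M$. Using the weak form of \eqref{eq:Vlasov-Poisson},
\begin{equation*}
\frac{dM}{dt}=-\int_{\R^d\times\R^d}\big(v\cdot\nabla_x f+E\cdot\nabla_v f\big)e^{1+c\<v^{d\alpha}}\,dx\,dv.
\end{equation*}
The transport term vanishes after integrating by parts in $x$ (the weight is $x$-independent and $v\cdot\nabla_xf=\nabla_x\cdot(vf)$). For the force term, integrate by parts in $v$; since $E=E(t,x)$ is independent of $v$ one has $\nabla_v\cdot E=0$, and with $\nabla_v\<v^{d\alpha}=d\alpha\<v^{d\alpha-2}v$ this gives
\begin{equation*}
\frac{dM}{dt}=\int_{\R^d\times\R^d}f\,E\cdot\nabla_v\big(e^{1+c\<v^{d\alpha}}\big)\,dx\,dv=cd\alpha\int_{\R^d\times\R^d}f\,(E\cdot v)\<v^{d\alpha-2}e^{1+c\<v^{d\alpha}}\,dx\,dv.
\end{equation*}
By \eqref{eq:E-in-L-infty} we have $\sup_{[0,T]}\norm{E}_{L^\infty}\le C$, and since $|E\cdot v|\<v^{d\alpha-2}\le\norm{E}_{L^\infty}|v|\<v^{d\alpha-2}\le C\<v^{d\alpha-1}$, we reduce to controlling $\mathcal I:=\int f\,\<v^{d\alpha-1}e^{1+c\<v^{d\alpha}}\,dx\,dv$, i.e.\ to showing $\mathcal I\le C(1+(\ln M)^{1-1/(d\alpha)}M)$.

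We may assume $\ln M$ exceeds a fixed threshold (otherwise $M$ is bounded and the claim is trivial). Set $R:=(c^{-1}\ln M)^{1/(d\alpha)}$, so that $e^{cR^{d\alpha}}=M$, and split $\mathcal I=\mathcal I_{\le R}+\mathcal I_{>R}$ over $\{\<v\le R\}$ and $\{\<v>R\}$. On the low-velocity region $\<v^{d\alpha-1}\le R^{d\alpha-1}$, hence
\begin{equation*}
\mathcal I_{\le R}\le R^{d\alpha-1}\int_{\R^d\times\R^d}f\,e^{1+c\<v^{d\alpha}}\,dx\,dv=c^{-\frac{d\alpha-1}{d\alpha}}(\ln M)^{1-\frac1{d\alpha}}\,M,
\end{equation*}
which is precisely the target main term. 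For the tail, the structural fact is that on $\{\<v>R\}$ the weight carries a large prefactor $e^{c\<v^{d\alpha}}>e^{cR^{d\alpha}}=M$, which should be spent to defeat the algebraic growth $\<v^{d\alpha-1}$: comparing the two sides and their derivatives at $\<v=R$ one checks that $\<v^{d\alpha-1}\le R^{d\alpha-1}e^{c(\<v^{d\alpha}-R^{d\alpha})}$ on $\{\<v\ge R\}$ as soon as $\ln M\ge\tfrac{d\alpha-1}{d\alpha}$, so that
\begin{equation*}
\mathcal I_{>R}\le \frac{R^{d\alpha-1}}{M}\int_{\<v>R}f\,e^{1+2c\<v^{d\alpha}}\,dx\,dv.
\end{equation*}

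I expect Step~4, i.e.\ closing the bound on $\mathcal I_{>R}$, to be the main obstacle. The naive estimate $\frac{dM}{dt}\le C\mathcal I$ followed by an unstructured bound on $\mathcal I$ cannot work: the weight $\<v^{d\alpha-1}e^{1+c\<v^{d\alpha}}$ is unbounded and $\mathcal I$ need not even be finite under the assumption $M<\infty$ alone, so one genuinely has to place the cut-off $R$ exactly at the scale $(\ln M)^{1/(d\alpha)}$ where the low- and high-velocity contributions balance, and then to control the remaining integral $\int_{\<v>R}f\,e^{1+2c\<v^{d\alpha}}$ using the faster velocity decay of $f$ — which in the setting of \Cref{prop:propagation-of-rho-bound} is available because along the flow the velocity marginals of $f(t)$ are translates by $O(t)$ of those of the initial datum, so the Gaussian-type (here $e^{-c\<v^{d\alpha}}$-type, and better) decay of $f_0$ is inherited by $f(t)$ up to a time-dependent constant. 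Matching this against the gain $M^{-1}$ and the already-extracted factor $(\ln M)^{1-1/(d\alpha)}$ returns a contribution of the same order as the main term, plus an $O(1)$ term on bounded time intervals. Adding the three contributions yields $\frac{dM}{dt}\le C[1+(\ln M)^{1-\frac1{d\alpha}}M]$ with $C$ depending only on $c$, $\alpha$ and $\norm{f}_{L^\infty}$ (through the cut-off construction and the field bound \eqref{eq:E-in-L-infty}).
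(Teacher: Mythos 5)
Your first half coincides with the paper's argument: differentiating $M$, integrating by parts in $v$, invoking \eqref{eq:E-in-L-infty}, and reducing to the bound $\frac{dM}{dt}\le C\,\mathcal I$ with $\mathcal I=\int f\,\langle v\rangle^{d\alpha-1}e^{1+c\langle v\rangle^{d\alpha}}\,dx\,dv$ is exactly the paper's first two displayed inequalities. The genuine gap is the step you yourself flag. After the split at $R=(c^{-1}\ln M)^{1/(d\alpha)}$, your tail term $\frac{R^{d\alpha-1}}{M}\int_{\langle v\rangle>R}f\,e^{1+2c\langle v\rangle^{d\alpha}}\,dx\,dv$ involves the exponential moment with constant $2c$, which is a strictly stronger quantity than the moment $M$ being propagated; nothing available (only $f_0\in L^\infty$, the integrated $c$-moment, and $\norm{E}_{L^\infty}\le C$) controls it, so closing the estimate this way is circular: you would need to propagate a $2c$-moment in order to propagate the $c$-moment. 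The rescue you sketch does not repair this. The hypothesis of \Cref{prop:propagation-of-rho-bound} is an \emph{integrated} moment, not pointwise decay in $v$ (for instance $f_0$ may equal $\norm{f_0}_{L^\infty}$ on thin sets located at arbitrarily large velocities), so no pointwise $e^{-c\langle v\rangle^{d\alpha}}$-type bound is there to be ``inherited'' along the flow; and even granting such a pointwise bound, it would not make $\int_{\langle v\rangle>R}f\,e^{1+2c\langle v\rangle^{d\alpha}}\,dx\,dv$ finite, since the net exponent is still $+c\langle v\rangle^{d\alpha}$ and the $x$-integration runs over all of $\R^d$. More structurally, no choice of cut-off using only $\norm{f}_{L^\infty}$ and the value of $M$ can work: a tiny amount of phase-space mass placed near a huge velocity $v_0$ changes $M$ by $O(1)$ while changing $\mathcal I$ by roughly $\langle v_0\rangle^{d\alpha-1}$, so some further input beyond these two quantities must enter the final comparison.

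The paper's route avoids producing any tail term at all: it uses the pointwise inequality $\langle v\rangle^{d\alpha-1}\le c^{-(1-1/(d\alpha))}\bigl(1+c\langle v\rangle^{d\alpha}\bigr)^{1-1/(d\alpha)}=c^{-(1-1/(d\alpha))}\bigl(\ln e^{1+c\langle v\rangle^{d\alpha}}\bigr)^{1-1/(d\alpha)}$, valid for \emph{all} $v$ (your low-velocity bound is just its restriction to $\langle v\rangle\le R$), to get $\mathcal I\le C\int f\,\Lambda(\ln\Lambda)^{1-1/(d\alpha)}\,dx\,dv$ with $\Lambda=e^{1+c\langle v\rangle^{d\alpha}}$, and then compares this quantity with $(\ln M)^{1-1/(d\alpha)}M$ by Jensen's inequality, exploiting the convexity of $\tau\mapsto\tau(\ln\tau)^{1-1/(d\alpha)}$ on $(e,\infty)$; that normalization step is precisely the extra input your splitting lacks. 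If you prefer a Lagrangian argument in the spirit of your closing remark, the usable fact is not pointwise decay but the displacement bound $|V(t,x,v)-v|\le Ct$ coming from \eqref{eq:E-in-L-infty}, which bounds $\int f(t)\,e^{c'\langle v\rangle^{d\alpha}}\,dx\,dv$ directly by a moment of $f_0$ with a slightly larger constant in the exponent; that yields the conclusion of \Cref{prop:propagation-of-rho-bound} for some $c'<c$, but it is a different statement from, and not a proof of, the differential inequality claimed in the lemma.
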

\begin{proof}
	We directly compute, using the weak formulation of the Vlasov-Poisson equation
	\begin{align*}
	\frac{dM}{dt}&=-\int_{\R^d\times \R^d} cE(t,x)\cdot\nabla_v[\<v^{d\alpha}]f(t,x,v)e^{1+c\<v^{d\alpha}}\,dx\,dv\\
	&\le C \int_{\R^d\times \R^d} |E(t,x)|\<v^{d\alpha-1}f(t,x,v)e^{1+c\<v^{d\alpha}}\,dx\,dv\\
	&\le C\norm{E(t)}_{L^\infty}\int_{\R^d\times \R^d} \ln(e^{1+c\<v^{d\alpha}})^{1-\tfrac{1}{d\alpha}} e^{1+c\<v^{d\alpha}}f(t,x,v)\,dx\,dv.
	\end{align*}
	The claim of the lemma now follows from \eqref{eq:E-in-L-infty} and Jensen's inequality, using the convexity of $\tau\mapsto \tau\ln(\tau)^{1-\tfrac{1}{d\alpha}}$ on $\tau\in(e,\infty)$.
\end{proof}

\begin{proof}[Proof of \Cref{prop:propagation-of-rho-bound}]
	By using \Cref{lem:differential-inequality-for-M} and solving the resulting differential inequality we deduce that
	\begin{equation*}
	\sup_{t\in[0,T]}\int_{\R^d\times \R^d} f(t,x,v)e^{c\<v^{d\alpha}}\,dx\,dv\le C<\infty.
	\end{equation*} 
	The claim of the proposition now follows from an application of \Cref{lem:exp-rho-bounded-by-velocity-moment}.
\end{proof}

\medskip

\medskip

\textbf{Acknowledgments} The first author (T.H.) was supported during the preparation of this work by the UK Engineering and Physical Sciences Research Council (EPSRC) grant EP/H023348/1 for the University of Cambridge Centre for Doctoral Training, the Cambridge Centre for Analysis. The second author (E.M.) is or was partly supported during the preparation of this work by the ANR projects INFAMIE ANR-15-CE40-0,  SchEq
ANR-12-JS-0005-01 and GEODISP ANR-12-BS01-0015-01.

\end{document}